\documentclass[12pt]{amsart}

\usepackage{newtxtext}
\usepackage[T1]{fontenc}
\usepackage[margin=1in]{geometry}
\usepackage{amsmath,amssymb,amsthm,mathtools}
\usepackage{newtxmath}
\usepackage{hyperref}
\usepackage{microtype}
\allowdisplaybreaks

\hypersetup{
  colorlinks=true,
  linkcolor=blue,
  citecolor=blue,
  urlcolor=blue,
  pdftitle={Signed Generalized Stirling Polynomials, Nested Sums, and Hyperbolic Secant Integral Identities},
  pdfauthor={Abdulhafeez A. Abdulsalam and Michael J. Schlosser},
  pdfkeywords={Malmsten integral, hyperbolic secant integrals, signed generalized Stirling polynomials, orthogonal polynomials, nested sums, Hurwitz zeta function}
}

\numberwithin{equation}{section}

\DeclareMathOperator{\sech}{sech}
\newcommand{\F}{\mathcal F}
\newcommand{\Mal}{\mathcal M}
\newcommand{\N}{\mathbb N}
\newcommand{\dd}{\,\mathrm{d}}
\newcommand{\stirlingone}[2]{\genfrac{[}{]}{0pt}{}{#1}{#2}}
\newcommand{\rstirlingone}[3]{{\genfrac{[}{]}{0pt}{}{#1}{#2}}_{#3}}

\newtheorem{theorem}{Theorem}[section]
\newtheorem{proposition}[theorem]{Proposition}
\newtheorem{lemma}[theorem]{Lemma}
\newtheorem{corollary}[theorem]{Corollary}
\newtheorem{remark}[theorem]{Remark}

\title[Signed Stirling Polynomials and Nested Sums]{Signed Generalized Stirling Polynomials, Nested Sums, and Hyperbolic Secant Integral Identities}
\author{Abdulhafeez A. Abdulsalam}
\address{Independent researcher, Lagos, Nigeria}
\email{hafeez147258369@gmail.com}
\thanks{The first author acknowledges support from a \href{https://mathematik.univie.ac.at/third-mission/zusammenarbeit-mit-dem-globalen-sueden/}{Vienna African Scholarship} of the University of Vienna}
\author{Michael J. Schlosser}
\address{Faculty of Mathematics, University of Vienna, Oskar-Morgenstern-Platz 1, A-1090 Vienna, Austria}
\email{michael.schlosser@univie.ac.at}
\thanks{The second author's research was partly supported by FWF Austrian Science Fund grant \href{https://www.doi.org/10.55776/P32305}{10.55776/P32305}.}
\date{May 2026}

\subjclass[2020]{Primary 33B15; Secondary 05A10, 11B73, 11M35, 33C45, 33E20}
\keywords{Malmsten integral, hyperbolic secant integrals, signed generalized
Stirling polynomials, nested sums, Barnes multiple zeta function}
\begin{document}
\begin{abstract}
We begin with the observation that the signed generalized Stirling polynomials
$P_k(m,x)$, which occur in a generalization of Malmsten's integral, reduce to
the falling factorials when $k=m$. The structure of these generalized Stirling polynomials
is then used to obtain recurrence relations, gamma--polygamma formulas for the
polynomials $P_{m-s}(m,x)$, a more transparent proof of a vanishing identity
used in earlier closed forms, and a finite approximation to $\cosh \pi x$ with
a corresponding limit formula for $\pi$. We also observe that these
polynomials occur naturally as signed residues of the equal-period Barnes
multiple zeta function, namely
$P_k(m,x)=(-1)^k m!\operatorname*{Res}_{s=m+1-k}\zeta_{m+1}(s,x)$.
In addition, we derive the reflection formula
$P_k(m,m+1-x)=(-1)^kP_k(m,x)$, use it to obtain finite parity-cancellation
relations, and compare the resulting centered product polynomials with a classical
Meixner--Pollaczek orthogonal family. These polynomial identities also yield
explicit identities for Stirling cycle numbers. We then turn to finite nested sums built from
the hyperbolic-secant integral sequence $\chi_n$. 
After the lower bounds are fixed, the nested sums become coefficient-counting
problems: the common-lower-bound case gives binomial coefficients, while the
staircase case gives Catalan numbers.  Combining these counts with the closed
forms for the individual $\chi_j$'s produces explicit evaluations involving
Catalan's constant, zeta values, and polygamma values at one quarter.  A
Wolfram Language package accompanies the formulas.
\end{abstract}

\maketitle

\section{Introduction}

Malmsten-type logarithmic integrals have been studied from several points of
view.  Blagouchine traced their historical origin back to Malmsten and showed
that a broad family of such integrals can be evaluated by contour-integration
methods \cite{Blagouchine2014}. In a different direction, the first author gave a
direct proof of Malmsten's integral from its hyperbolic form and generalized
this integral by studying the sequence \cite{AbdulsalamMalmsten2025}
\[
  \Mal_n
  :=
  \int_0^\infty \log x\,\sech^n x\dd x,
  \qquad n\in\N .
\]
The present work takes as its starting point the polynomial structure in the
closed forms for this sequence, namely the signed generalized Stirling
polynomials of the first kind.  They are defined by the
generating relation \cite[(4)]{AbdulsalamMalmsten2025}
\begin{equation}\label{defnrelation}
  (j+1)_m
  =
  \sum_{k=0}^{m}(-1)^kP_k(m,x)(j+x)^{m-k},
\end{equation}
where $(a)_m=a(a+1)\cdots(a+m-1)$ denotes the rising factorial.  We use
$a^{\underline m}=a(a-1)\cdots(a-m+1)$ for the falling factorial. 
They admit the explicit form \cite[Lemma~1]{AbdulsalamMalmsten2025}
\begin{equation}\label{eq:P-explicit}
  P_k(m,x)
  =
  \sum_{r=0}^{k}
  (-1)^{k-r}x^r
  \binom{r+m-k}{m-k}
  \stirlingone{m+1}{r+m-k+1},
  \qquad 0\le k\le m .
\end{equation}
When $x=0$, this formula is read separately as
\[
  P_k(m,0)=(-1)^k\stirlingone{m+1}{m-k+1},
  \qquad 0\le k\le m,
\]
so no convention for $0^0$ is needed. Here $\stirlingone{n}{k}$ denotes the Stirling cycle number, or unsigned
Stirling number of the first kind.  Combinatorially, $\stirlingone{n}{k}$
counts the permutations of an $n$-element set with exactly $k$ disjoint
cycles; see \cite[p.~824, \S24.1.3(A)]{AbramowitzStegun1964}.  An explicit formula for
$\stirlingone{n}{k}$ depends on the Stirling number of the second kind
$\left\{\begin{smallmatrix}n\\k\end{smallmatrix}\right\}$, which counts
partitions of a set of $n$ labelled objects into $k$ nonempty unlabelled
subsets.
Equivalently, $k!\left\{\begin{smallmatrix}n\\k\end{smallmatrix}\right\}$
counts surjections from a set of $n$ distinct elements onto a set of $k$
distinct elements, so inclusion--exclusion gives
\cite[p.~824, \S24.1.4(C)]{AbramowitzStegun1964}
\[
  \left\{\begin{matrix}n\\k\end{matrix}\right\}
  =
  \frac1{k!}\sum_{\ell=0}^{k}
  (-1)^{k-\ell}\binom{k}{\ell}\ell^n.
\]
Substituting this formula into the symmetric formula expressing
$\stirlingone{n}{k}$ in terms of Stirling numbers of the second kind
$\left\{\begin{smallmatrix}n\\k\end{smallmatrix}\right\}$
\cite[p.~824, \S24.1.3(C)]{AbramowitzStegun1964} yields
\begin{equation}\label{twosumformula}
  \stirlingone{n}{k}
  =
  \sum_{j=n}^{2n-k}
  \binom{j-1}{k-1}\binom{2n-k}{j}
  \sum_{m=0}^{j-n}
  \frac{(-1)^{m+n-k}m^{j-k}}{m!(j-n-m)!}.
\end{equation}
Thus the Stirling cycle numbers admit the two-sum expression
\eqref{twosumformula}; we are not aware of a comparable finite one-sum formula.

The polynomials $P_k(m,x)$ appear in \cite{Ripon2015} in connection with a generalization of certain logarithmic integrals. However, as observed by the first author in \cite[Remark 2]{AbdulsalamMalmsten2025}, the formula given there does not recover the signed generalized Stirling polynomials in general. The expression \eqref{eq:P-explicit} was derived by identifying the coefficients in the defining relation \eqref{defnrelation} with the corresponding coefficients in the generating formula for generalized Stirling polynomials, and then applying Adamchik's formula in terms of Stirling cycle numbers; see \cite[Lemma~1]{AbdulsalamMalmsten2025}. Comparing Ripon's expression for $P_l(m,x)$ in \cite[(4.3)]{Ripon2015} with \eqref{eq:P-explicit}, after replacing $r$ by $k-r$, shows that the factor $\binom{m-k}{m-l}$ is missing from the summand.

The source of the discrepancy appears to be combinatorial.  In the expansion of $P_l(m,x)$ in \cite[(4.2)]{Ripon2015}, a fixed product of $k$ distinct integers from $\{1,\ldots,m\}$ occurs with multiplicity.  After these $k$ integers are fixed, the remaining $l-k$ factors may be chosen in $\binom{m-k}{l-k}=\binom{m-k}{m-l}$ ways.  Thus the summand involving $R(m,k)$ in \cite[(4.3)]{Ripon2015} must include this extra binomial factor. This accounts for the discrepancy between Ripon's formula and \eqref{eq:P-explicit};
in particular, only the first two terms of Ripon's formula agree with
\eqref{eq:P-explicit}.  We therefore use \eqref{eq:P-explicit} for the
polynomials defined by \eqref{defnrelation}.

In particular,
putting $k=m$ in \eqref{eq:P-explicit} gives
\[
  P_m(m,x)
  =
  \sum_{r=0}^{m}
  (-1)^{r-m}
  \stirlingone{m+1}{r+1}x^r .
\]
With the same convention as in \cite[(3)]{AbdulsalamMalmsten2025}, the
generating formula for the Stirling cycle numbers \cite[(20.1)]{Connon2008}
gives
\[
  x(x-1)\cdots(x-m)
  =
  \sum_{r=0}^{m+1}
  (-1)^{m+1-r}\stirlingone{m+1}{r}x^r,
\]
division by $x$ gives the special case
\begin{equation}\label{eq:P-diagonal}
  P_m(m,x)=(x-1)(x-2)\cdots(x-m).
\end{equation}
Equivalently, using falling-factorial notation,
$P_m(m,x)=(x-1)^{\underline{m}}$.  This identity shows that the falling factorials are recovered when the two
indices are equal. In Proposition~\ref{prop:residue-reflection}, we use the same
polynomial structure to prove the reflection formula
\[
  P_k(m,m+1-x)=(-1)^kP_k(m,x),
  \qquad 0\le k\le m.
\]
This symmetry also leads to finite parity-cancellation relations; after showing
where full finite orthogonality breaks down, we compare the centered 
polynomials with a natural Meixner--Pollaczek orthogonal family.
We recall a gamma--digamma formula from
\cite[(53)]{AbdulsalamMalmsten2025}:
\begin{equation}\label{digammap}
  P_m(m+1,x)
  =
  \frac{\Gamma(x)}{\Gamma(x-m-1)}
  \left(\psi(x)-\psi(x-m-1)\right),
\end{equation}
where $\Gamma$ is the gamma function and $\psi(x)=\Gamma'(x)/\Gamma(x)$ 
is the digamma function. In Proposition~\ref{prop:near-diagonal-gamma}, we extend this to the general formula
\begin{equation}\label{gammapgamma}
  P_{M-s}(M,x)
  =
  \frac{\Gamma(x)}{\Gamma(x-M)}
  \sum_{\mu\vdash s}
  \frac1{z_\mu}
  \prod_{j\geq1}
  \left(
  \frac{\psi_{j-1}(x)-\psi_{j-1}(x-M)}{(j-1)!}
  \right)^{m_j},
\end{equation}
where the sum is over partitions $\mu$ of $s$, and
$\mu=(1^{m_1}2^{m_2}\cdots)$ means that the part $j$ occurs $m_j$ times;
and $z_\mu$ is defined by
$z_\mu=\prod_{j\geq1} j^{m_j}m_j!$.  Here
$\psi_j$ denotes the polygamma function, with $\psi_0=\psi$.
Taking $s=1$ and $M=m+1$ recovers
\eqref{digammap}.

This same polynomial structure also leads to an approximation to
$\cosh \pi x$.  In particular, Proposition \ref{prop:cosh-product} shows that,
with $i=\sqrt{-1}$,
\[
  \cosh \pi x
  =
  \lim_{M\to\infty}
  \frac{
  \sum_{k=0}^{M}
  P_{2k}\left(2M,M+\frac12\right)(ix)^{2M-2k}
  }
  {P_{2M}\left(2M,M+\frac12\right)}.
\]
It also gives the square-root formula
\[
  \pi
  =
  \lim_{M\to\infty}
  \left[
  -2
  \frac{
  P_{2M-2}\left(2M,M+\frac12\right)
  }
  {
  P_{2M}\left(2M,M+\frac12\right)
  }
  \right]^{1/2}.
\]

The polynomials $P_k(m, x)$ also arise naturally from Barnes' zeta function.
For $n\ge1$, the equal-period Barnes multiple zeta function \cite[(1.1)]{Ruijsenaars2000} is
\[
  \zeta_n(s,x)
  =
  \sum_{r_1,\ldots,r_n\ge0}
  \frac1{(r_1+\cdots+r_n+x)^s},
  \qquad \Re s>n,\quad \Re x>0,
\]
with its meromorphic continuation.  In Proposition
\ref{prop:barnes-residue-characterization}, Adamchik's finite reduction for $\zeta_n(s,x)$
is used to compute the equal-period residues and gives the interpretation
\[
  P_k(m,x)
  =
  (-1)^k m!\operatorname*{Res}_{s=m+1-k}\zeta_{m+1}(s,x).
\]
Thus the identities for $P_k(m,x)$ may also be read as identities for the residues of Barnes' zeta function. They also give explicit identities for Stirling cycle numbers.  For
example, the gamma--polygamma formula \eqref{gammapgamma} at integer point $x=M+1$ gives the
weighted binomial transform
\[
  \sum_{r=0}^{M-s}
  (-1)^{r-M+s}
  (M+1)^r
  \binom{r+s}{s}
  \stirlingone{M+1}{r+s+1}
  =
  \stirlingone{M+1}{s+1},
  \qquad 0\le s\le M.
\]

The usefulness
of these polynomials is already visible in the closed form for
the generalized Malmsten sequence.  If
\[
  \Mal_n(a,b):=\int_0^\infty \log(ax)\sech^n(bx)\dd x,
  \qquad a>0,\quad b>0,
\]
then \cite[Theorem~1]{AbdulsalamMalmsten2025} gives, for $n\in \N$,
\begin{equation}\label{eq:malmsten-closed-form}
\begin{aligned}
  \Mal_n(a,b)
  ={}&
  \frac{2^{n-2}\Gamma^2(n/2)}{b(n-1)!}\log\frac{a}{b}
  \\
  &+
  \frac{2^{2n-1}}{b(n-1)!}
  \sum_{k=2}^{n+1}
  \left(-\frac12\right)^k
  P_{k-2}\!\left(n-1,\frac n2\right)
  \\
  &\qquad\qquad\times
  \left[
  \zeta'\!\left(k-n,\frac n4\right)
  -
  \zeta'\!\left(k-n,\frac{n+2}{4}\right)
  \right.
  \\
  &\qquad\qquad\qquad\left.
  -(\gamma+\log 4)
  \left\{
  \zeta\!\left(k-n,\frac n4\right)
  -
  \zeta\!\left(k-n,\frac{n+2}{4}\right)
  \right\}
  \right].
\end{aligned}
\end{equation}
We use standard notation for the special functions: for $\nu\ge0$,
$\psi_\nu(z)=\dd ^\nu\psi(z)/\dd z^\nu$ denotes the polygamma function, with
$\psi_0(z)=\psi(z)=\Gamma'(z)/\Gamma(z)$; $\gamma
=\lim_{n\to\infty}\bigl(\sum_{k=1}^{n}1/k-\log n\bigr)\approx0.5772$ is the
Euler--Mascheroni constant; and
$G=\sum_{k=0}^{\infty}(-1)^k/(2k+1)^2\approx0.9159$ is Catalan's constant.
If $\mu$ is a partition of $s$, we write $\mu\vdash s$.  Writing
$\mu=(1^{m_1}2^{m_2}\cdots)$ means that the part $j$ occurs $m_j$ times
in $\mu$; only finitely many of the $m_j$ are nonzero.  We also put
$z_\mu=\prod_{j\geq1} j^{m_j}m_j!$.
For the zeta functions, $\zeta(s,u)$ denotes the Hurwitz zeta function,
initially defined for $\Re s>1$ by
$\zeta(s,u)=\sum_{k=0}^{\infty}(k+u)^{-s}$ and then by meromorphic
continuation.  We also write $\zeta(s)=\zeta(s,1)$ for the Riemann zeta
function, and $\zeta'(s,u)$ for differentiation of $\zeta(s,u)$ with respect to
$s$; see \cite[\S\S5.2, 5.15, 25.2, 25.11]{DLMF}.
This is the computational starting point. Once \eqref{eq:malmsten-closed-form}
is available, the related sequences are obtained from the recurrence relations
below together with the closed form for $\chi_n$.

The same paper introduces three integral sequences associated with
$\Mal_n$, namely \cite[Definition~1]{AbdulsalamMalmsten2025}
\begin{align}
  \lambda_n&:=\int_0^\infty\frac{\tanh x\,\sech^n x}{x}\dd x, \notag\\
  \delta_n&:=\int_0^\infty\frac{1-\sech x}{x^2}\,\sech^n x\dd x, \notag\\
  \chi_n&:=\int_0^\infty\frac{\sech x-\sech^n x}{x^2}\dd x,
  \qquad n\in\N . \label{eq:chi-def}
\end{align}
The identities in \cite[(7), (8), (9), (10)]{AbdulsalamMalmsten2025} give, in particular,
\[
(n-1)\Mal_n = (n-2)\Mal_{n-2} - \lambda_{n-2}, \qquad n \geq 3,
\]
\begin{equation}\label{iden1hh3erf}
  n\lambda_n=\delta_{n-1}+(n-1)\lambda_{n-1},
  \qquad n\ge2,
\end{equation}
\begin{equation}\label{iden2hh4erf}
  \chi_n=\sum_{r=1}^{n-1}\delta_r,
  \qquad
  n\lambda_n=\chi_n+\lambda_1,
\end{equation}
and also
\[
(n-2)(n-1)\Mal_n = (n-2)^2\Mal_{n-2} - \chi_{n-2} - \lambda_1, \qquad n \geq 3.
\]
Comparing identities \eqref{iden1hh3erf} and \eqref{iden2hh4erf}, we obtain
\[
  \chi_n+\lambda_1
  =
  \delta_{n-1}+\chi_{n-1}+\lambda_1,
\]
where $\lambda_1 = \delta_0 = 4G/\pi$ and $\chi_2 = \delta_1$. Hence
\begin{equation}\label{eq:delta-chi-difference}
  \delta_n=\chi_{n+1}-\chi_n,
  \qquad n\in\N .
\end{equation}

The present work records what happens when the nested sum in
\cite[(56)]{AbdulsalamMalmsten2025},
\begin{equation}\label{eq:nested-sum}
  S_N(l_1,\ldots,l_N)
  :=
  \sum_{k_N=l_N}^{N}
  \sum_{k_{N-1}=l_{N-1}}^{k_N}
  \cdots
  \sum_{k_2=l_2}^{k_3}
  \sum_{k_1=l_1}^{k_2}
  \chi_{k_1},
\end{equation}
is specialized by fixing its lower bounds.  After these lower bounds are fixed,
the sum becomes a finite linear combination of the quantities $\chi_j$.
Substituting \eqref{eq:chi-def} then produces a single integral whose numerator
is a polynomial in $\sech x$.  The right-hand side is obtained by applying the
same linear combination to the closed forms for the individual $\chi_j$'s. 
A sum with lower limit greater than its upper limit is interpreted as zero.
We put $\F_1=0$, in agreement with $\chi_1=0$.  For $j\ge2$, we denote the
closed form for $\chi_j$ supplied by the right-hand side of
\cite[(56)]{AbdulsalamMalmsten2025} by $\F_j$, so that
\begin{equation}\label{eq:Fj}
\begin{aligned}
\F_j={}&-\frac{4G}{\pi}
+\frac{2^{2j-3}j^2}{(j-1)!}
\sum_{m=0}^{j-1}
\left(-\frac12\right)^m
P_m\!\left(j-1,\frac{j}{2}\right)
\\
&\qquad\qquad\times
\left[
\zeta'\!\left(m-j+2,\frac{j}{4}\right)
-\zeta'\!\left(m-j+2,\frac{j+2}{4}\right)
\right]
\\
&-\frac{2^{2j+1}}{(j-1)!}
\sum_{m=0}^{j+1}
\left(-\frac12\right)^m
P_m\!\left(j+1,\frac{j+2}{2}\right)
\\
&\qquad\qquad\times
\left[
\zeta'\!\left(m-j,\frac{j+2}{4}\right)
-\zeta'\!\left(m-j,\frac{j+4}{4}\right)
\right].
\end{aligned}
\end{equation}
The Hurwitz-zeta derivative differences in \eqref{eq:Fj} have the half-shift
form treated in \cite[Lemma 3]{AbdulsalamMalmsten2025}.  The only point
requiring care is the case where the first argument is $1$, as the two separate
terms then have the same pole, but their difference has a finite value.  In the
first sum this happens when $m=j-1$, and in the second sum it happens when $m = j + 1$. This
finite part is read from the Laurent expansion of the Hurwitz zeta function at
$s=1$; see, for example, \cite{Berndt1972}:
\[
  \zeta(s,a)
  =
  \frac1{s-1}
  +
  \sum_{n=0}^{\infty}
  \frac{(-1)^n}{n!}\gamma_n(a)(s-1)^n,
\]
which defines the generalized Stieltjes constants $\gamma_n(a)$.
Differentiating the difference of two such expansions shows that the pole terms
cancel; hence, in the case $s=1$,
\[
  \left[\zeta'(s,a)-\zeta'(s,b)\right]_{s=1}
  =
  -\gamma_1(a)+\gamma_1(b).
\]
In particular, the $n=1$ case of \cite[(23)]{AbdulsalamMalmsten2025} gives
\begin{align*}
  \zeta'\!\left(1,\frac{m}{2}\right)
  -
  \zeta'\!\left(1,\frac{m+1}{2}\right)
  ={}&
  -\ln 2
  \left[
  \psi\!\left(\frac{m}{2}\right)
  -
  \psi\!\left(\frac{m+1}{2}\right)
  \right]
  \\
  &+
  (-1)^m\left(\ln^2 2-2\gamma\ln 2\right)
  +
  2(-1)^m(1-\delta_{m1})
  \sum_{r=1}^{m-1}\frac{(-1)^r\ln r}{r},
\end{align*}
where $\delta_{m1}$ is the Kronecker delta and $\gamma$ is the Euler--Mascheroni constant. The half-shift form also illustrates why the Malmsten integral $\Mal_1$ is useful.  In the earlier paper
\cite[(38)]{AbdulsalamMalmsten2025}, one deduces from $\Mal_1$ that
\[
  \lim_{n\to1}
  \left[
  \zeta'\!\left(n,\frac{1}{4}\right)
  -
  \zeta'\!\left(n,\frac{3}{4}\right)
  \right]
  =
  2\pi\log\left(
  \frac{\Gamma\left(\frac{3}{4}\right)\sqrt{2\pi}}
       {\Gamma\left(\frac{1}{4}\right)}
  \right)
  +\pi\left(\gamma+\log 4\right).
\]
Thus the Malmsten integral evaluation connects pole-cancelled
Hurwitz-zeta derivative differences with known constants at quarter arguments.
The present work now connects the polynomials $P_k(m,x)$ to partitions,
symmetric functions, and coefficient-counting identities involving binomial
coefficients and Catalan numbers.

The last section returns to this coefficient viewpoint, giving a recurrence for
arbitrary lower bounds and considering normalized limits of the two main
families.  This yields
\[
  \int_0^\infty
  \frac{\sech x-\dfrac{\sech^m x}{2-\sech x}}{x^2}\dd x
  =
  \sum_{r=0}^{\infty}2^{-r-1}\F_{m+r},
\]
for fixed $m\ge1$, and
\[
  \int_0^\infty
  \frac{\sech x-\dfrac{\sech x}{(2-\sech x)^2}}{x^2}\dd x
  =
  \sum_{j=1}^{\infty}\frac{j}{2^{j+1}}\F_j .
\]

The results of the present work are organized around two parts.  First,
Lemma~\ref{lem:central-vanishing} provides a more transparent proof of the
central vanishing identity, which is later used in the $\cosh$ approximation of
Proposition~\ref{prop:cosh-product}, in the residue evaluation for the
equal-period Barnes zeta function, and in the derivation of an explicit
identity for Stirling cycle numbers. Proposition~\ref{prop:P-recurrences} gives recurrence relations 
for the polynomials, and Proposition~\ref{prop:near-diagonal-gamma} gives the gamma--polygamma 
formulas. We then derive explicit relations for Stirling cycle numbers from these polynomial identities.
The short Barnes-zeta application then interprets these polynomials as signed
residues and records the corresponding differentiation and reflection
formulas.  Second,
Lemma~\ref{lem:counting} reduces the nested sums to coefficient
counting.  The two resulting families are the
common-lower-bound identity in
Proposition~\ref{prop:common-lower-bound-identity} and the staircase identity
in Proposition~\ref{prop:staircase-identity}; these two identities are
summarized in Theorem~\ref{thm:main-nested-identities}.  Section~\ref{sec:further-directions}
gives a general recurrence for arbitrary lower bounds, records normalized
limiting identities for the two main families, and ends with
Subsection~\ref{subsec:orthogonality-reflection}, which discusses parity
cancellation for the centered polynomials and their connection with
Meixner--Pollaczek polynomials.
For reproducibility, the accompanying Wolfram Language package is described in
Section~\ref{sec:package}; it implements the coefficient counts and the
closed-form reductions used in the examples.

\begin{theorem}[main nested-sum identities]\label{thm:main-nested-identities}
Let $N\ge1$.  If $1\le m\le N$ and $l_1=\cdots=l_N=m$, then
\[
  \int_0^\infty
  \left\{
  \binom{2N-m}{N-m}\sech x
  -
  \sum_{j=m}^{N}\binom{2N-j-1}{N-j}\sech^j x
  \right\}\frac{\dd x}{x^2}
  =
  \sum_{j=m}^{N}\binom{2N-j-1}{N-j}\F_j .
\]
If $(l_1,\ldots,l_N)=(1,2,\ldots,N)$, then
\[
  \int_0^\infty
  \left\{
  C_N\sech x
  -
  \sum_{j=1}^{N}
  \frac{j}{N}
  \binom{2N-j-1}{N-1}\sech^j x
  \right\}\frac{\dd x}{x^2}
  =
  \sum_{j=1}^{N}
  \frac{j}{N}
  \binom{2N-j-1}{N-1}\F_j,
\]
where $C_N$ denotes the $N$th Catalan number.
\end{theorem}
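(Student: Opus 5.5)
The plan is to write the nested sum \eqref{eq:nested-sum} as $S_N(l_1,\ldots,l_N)=\sum_{j}c_j\chi_j$. The coefficient $c_j$ counts the chains $l_i\le k_1\le k_2\le\cdots\le k_N\le N$ with $k_1=j$ and $k_i\ge l_i$. Two facts then do the rest. First, by \eqref{eq:chi-def}, $\sum_j c_j\chi_j=\int_0^\infty\bigl\{(\sum_j c_j)\sech x-\sum_j c_j\sech^j x\bigr\}x^{-2}\dd x$; this is a finite linear combination of convergent integrals, so linearity is harmless. Second, $\chi_j=\F_j$ for $j\ge2$ by \cite[(56)]{AbdulsalamMalmsten2025}, and $\chi_1=0=\F_1$. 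Hence $\sum_j c_j\chi_j=\sum_j c_j\F_j$. So both parts of the theorem reduce to computing $c_j$ and the total $\sum_j c_j$. This is the content of Lemma~\ref{lem:counting}, which I would prove along the way.

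\emph{Common lower bound.} Take $l_i=m$. Once $k_1=j$ is fixed, the remaining variables form a weakly increasing sequence $j\le k_2\le\cdots\le k_N\le N$ of length $N-1$ with values in $\{j,\ldots,N\}$, a set of $N-j+1$ values. A stars-and-bars count gives
\[
c_j=\binom{(N-1)+(N-j)}{N-1}=\binom{2N-j-1}{N-j}.
\]
The total is the number of weakly increasing sequences of length $N$ in $\{m,\ldots,N\}$, namely $\binom{2N-m}{N-m}$. This agrees with the hockey-stick identity $\sum_{j=m}^N\binom{2N-j-1}{N-1}=\binom{2N-m}{N}$. Substituting these counts gives the first identity.

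\emph{Staircase.} Take $l_i=i$. Then $c_j$ counts sequences $j=k_1\le k_2\le\cdots\le k_N=N$ with $k_i\ge i$; note that $k_N=N$ is forced. These are lattice paths staying weakly below the diagonal, that is, ballot sequences. I will use the standard ballot-number formula for paths from the point at height $j$ after the first step to the end. To count them I will apply the reflection principle to the sequence $k_2,\ldots,k_N$ with constraint $k_i\ge i$, which gives a difference of two binomial coefficients. This difference should simplify to $\frac{j}{N}\binom{2N-j-1}{N-1}$. A quick check: for $j=N$ the formula gives $1$, and for $j=1$ it gives $\frac1N\binom{2N-2}{N-1}=C_{N-1}$, which is correct because $k_2\ge 2$ etc.\ gives Catalan paths of length $N-1$.

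The total count is $C_N$, since every sequence $1\le k_1\le\cdots\le k_N\le N$ with $k_i\ge i$ is a standard Catalan object. As an algebraic cross-check, summing the ballot numbers over $j$ telescopes to $C_N$.

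The main obstacle is getting the reflection step in the staircase case exactly right, in particular the index shifts that turn the reflected count into $\frac{j}{N}\binom{2N-j-1}{N-1}$. My first move is to encode the sequence as a path of east/north steps from $(1,j)$ to $(N,N)$ that stays weakly above the line $y=x$. The bad paths are then reflected across $y=x-1$. If that becomes messy, I will instead verify the formula by induction on $N$. The inductive step uses the recurrence $c^{(N)}_j=\sum_{i\ge j}c^{(N-1)}_i$ on the restricted range, together with a telescoping binomial identity.
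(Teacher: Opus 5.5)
Your proposal is correct and follows the paper's proof. For the common lower bound you use the stars-and-bars count, with the total from the hockey-stick identity or the direct multiset count. For the staircase you use the same reflection argument as Proposition~\ref{prop:staircase-coefficients}: the paper's paths from $(0,j-1)$ to $(N-1,N-1)$ are your paths from $(1,j)$ to $(N,N)$ shifted by one. Reflecting across $y=x-1$ sends the start to $(j+1,0)$ and gives $\binom{2N-j-1}{N-1}-\binom{2N-j-1}{N}=\frac{j}{N}\binom{2N-j-1}{N-1}$, so the step you only sketched does go through.

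The one slip is in your unused induction fallback. Fixing $k_2=s$ and shifting indices gives $c^{(N)}_j=\sum_{i\ge\max(j-1,1)}c^{(N-1)}_i$, not $\sum_{i\ge j}c^{(N-1)}_i$. For example, $c^{(4)}_2=5=2+2+1$, whereas your version would give $3$.
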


\begin{proof}
These are proved in Propositions~\ref{prop:common-lower-bound-identity} and
\ref{prop:staircase-identity}, respectively.
\end{proof}

\section{Results}

We now collect the main consequences of the polynomial structure. 
We begin with a central vanishing identity for the signed generalized Stirling polynomials, 
since it is useful both for computation and for the finite approximation to $\cosh \pi x$ that follows.  
We then give recurrence relations and gamma--polygamma forms for these
polynomials, and interpret them as signed residues of the equal-period Barnes
multiple zeta function. We also record the resulting identities for Stirling cycle numbers.
The rest of the section turns to the nested sums.
After reducing them to explicit
coefficient counts, we combine those counts with the closed forms for the
individual $\chi_j$'s.  This gives, in particular, the common-lower-bound
family and the staircase family of evaluations.

We first establish the central vanishing identity for the signed generalized Stirling
polynomials. The identity was first obtained in
\cite[Lemma~2]{AbdulsalamMalmsten2025} from a limit formula involving the
$(n-1)$st derivative of a power of a logarithm.  In that proof, Leibniz' rule
was applied to this derivative, producing a rather complicated finite sum; the
vanishing is therefore not easy to see directly from the expression.  The defining equation 
gives a clearer reason for the cancellation. At the 
central point $x=n/2$, the structure of the defining product forces the 
relevant odd coefficients to vanish. In computer algebra implementations of
the closed forms for the integral sequences, every term containing
$P_{2r-1}\!\left(n-1,\frac n2\right)$ can therefore be discarded before
symbolic evaluation, rather than expanded with the remaining terms in a
formula.

\begin{lemma}[central vanishing]\label{lem:central-vanishing}
For $n\ge1$ and $0\le k\le n-1$,
\[
  P_k\!\left(n-1,\frac n2\right)=0
  \qquad\text{whenever }k\text{ is odd}.
\]
In particular, for $r\in\N$ and $n\ge2r$,
\[
  P_{2r-1}\!\left(n-1,\frac n2\right)
  =0.
\]
\end{lemma}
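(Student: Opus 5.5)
Our approach works directly from the defining relation \eqref{defnrelation} with $m=n-1$ and $x=n/2$. Regard $j$ as a variable. Since the monomials $(j+x)^{m-k}$, $0\le k\le m$, form a basis of polynomials of degree at most $m$ in $j$, the coefficients $(-1)^kP_k(m,x)$ are uniquely determined by expanding $(j+1)_m$ in powers of $t:=j+x$. Set $t=j+n/2$. Then
\[
  (j+1)_{n-1}=\prod_{i=1}^{n-1}(j+i)=\prod_{i=1}^{n-1}\Bigl(t+i-\frac n2\Bigr),
\]
so the whole question reduces to the symmetry of the shifts $i-n/2$ for $i=1,\dots,n-1$.

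The key step is the observation that the map $i\mapsto n-i$ permutes $\{1,\dots,n-1\}$ and sends $i-n/2$ to $-(i-n/2)$. Hence the multiset of shifts is symmetric about $0$. If $n$ is even, it contains $0$ exactly once (from $i=n/2$), and the remaining shifts pair off as $\pm a$. If $n$ is odd, the shifts pair off completely into $\pm a$ with $a\in\{\tfrac12,\tfrac32,\dots\}$. Consequently
\[
  \prod_{i=1}^{n-1}\Bigl(t+i-\frac n2\Bigr)
  = t^{\epsilon}\prod_{a}(t^2-a^2),
  \qquad \epsilon=\begin{cases}1,& n\text{ even},\\ 0,& n\text{ odd},\end{cases}
\]
where the product runs over the positive shifts $a$. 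Writing $Q(t)$ for this polynomial, it satisfies $Q(-t)=(-1)^{n-1}Q(t)$. Equivalently, $Q$ contains only powers $t^{n-1-k}$ with $k$ even.

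Comparing coefficients of $t^{n-1-k}$ in \eqref{defnrelation}, which is legitimate by the uniqueness noted above, gives $(-1)^kP_k(n-1,n/2)=[t^{n-1-k}]Q(t)$. This coefficient vanishes whenever $k$ is odd. The second statement is the special case $k=2r-1\le n-1$, which is exactly the condition $n\ge 2r$.

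There is no real obstacle here. The only points needing care are making the coefficient identification rigorous through the polynomial-identity/basis argument, and the trivial case $n=1$, where the product is empty, $Q=1$, and only $k=0$ occurs.
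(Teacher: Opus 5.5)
Your proof is correct and follows the paper's argument: substitute $y=j+n/2$ into the defining relation, pair the factor with index $i$ against the one with index $n-i$ to get $y^{\epsilon}\prod(y^2-a^2)$, and conclude that only the powers $y^{n-1-k}$ with $k$ even survive. Your added remarks on why the coefficient comparison is legitimate (the basis argument) and on the trivial case $n=1$ only spell out what the paper leaves implicit.
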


\begin{proof}
Put $m=n-1$ and $x=n/2$ in the product form obtained from
\eqref{defnrelation}, after putting $y=j+x$ and using the resulting polynomial
identity in $y$:
\[
  \prod_{r=1}^{n-1}\left(y+r-\frac n2\right)
  =
  \sum_{k=0}^{n-1}
  (-1)^kP_k\!\left(n-1,\frac n2\right)y^{n-1-k}.
\]
The factors occur in opposite pairs.  More precisely, the factor with index
$r$ pairs with the factor with index $n-r$, and
\[
  \left(y+r-\frac n2\right)
  \left(y+n-r-\frac n2\right)
  =
  y^2-\left(r-\frac n2\right)^2 .
\]
If $n$ is odd, all factors are paired and the product is a polynomial in
$y^2$.  If $n$ is even, the middle factor is $y$, and the product is $y$ times
a polynomial in $y^2$.  In both cases the parity of the powers of $y$ in the
product is the same as the parity of $n-1$.  Thus the coefficient of
$y^{n-1-k}$ vanishes whenever $k$ is odd.  Taking $k=2r-1$ gives the stated
special case.
\end{proof}

\subsection{A Cosh Approximation from the Polynomials}

\begin{proposition}\label{prop:cosh-product}
For $M\in\N$,
\begin{equation}\label{eq:finite-cosh-product}
  \frac{
  \sum_{k=0}^{M}
  P_{2k}\left(2M,M+\frac12\right)(ix)^{2M-2k}
  }
  {P_{2M}\left(2M,M+\frac12\right)}
  =
  \prod_{r=1}^{M}
  \left(1+\frac{4x^2}{(2r-1)^2}\right).
\end{equation}
Consequently,
\begin{equation}\label{eq:cosh-polynomial-limit}
  \cosh \pi x
  =
  \lim_{M\to\infty}
  \frac{
  \sum_{k=0}^{M}
  P_{2k}\left(2M,M+\frac12\right)(ix)^{2M-2k}
  }
  {P_{2M}\left(2M,M+\frac12\right)}.
\end{equation}
Moreover, for each fixed $s\ge1$,
\begin{equation}\label{eq:pi-polynomial-limit}
  \frac{\pi^{2s}}{(2s)!}
  =
  \lim_{M\to\infty}
  (-1)^s
  \frac{
  P_{2M-2s}\left(2M,M+\frac12\right)
  }
  {
  P_{2M}\left(2M,M+\frac12\right)
  }.
\end{equation}
In particular,
\begin{equation}\label{eq:pi-square-polynomial-limit}
  \pi
  =
  \lim_{M\to\infty}
  \left[
  -2
  \frac{
  P_{2M-2}\left(2M,M+\frac12\right)
  }
  {
  P_{2M}\left(2M,M+\frac12\right)
  }
  \right]^{1/2}.
\end{equation}
\end{proposition}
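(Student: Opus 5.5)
The plan is to follow the proof of Lemma~\ref{lem:central-vanishing} and use the product form of \eqref{defnrelation}. Taking $m=2M$ and $x=M+\frac12$ (so $n=2M+1$) and writing $y=j+x$, the defining relation becomes
\[
  \prod_{r=1}^{2M}\left(y+r-M-\tfrac12\right)
  =
  \sum_{k=0}^{2M}(-1)^kP_k\!\left(2M,M+\tfrac12\right)y^{2M-k}.
\]
Since $n$ is odd, the factors pair as $r\leftrightarrow 2M+1-r$, and the product equals
\[
  \prod_{r=1}^{M}\left(y^2-\left(\tfrac{2r-1}{2}\right)^2\right).
\]
By Lemma~\ref{lem:central-vanishing} the odd-$k$ coefficients vanish, so the right side reduces to $\sum_{k=0}^{M}P_{2k}y^{2M-2k}$. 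We substitute $y=ix$; the identity is polynomial in $y$, so complex $y$ is allowed. The left side becomes
\[
  \prod_{r=1}^M\left(-x^2-\tfrac{(2r-1)^2}{4}\right)
  =(-1)^M\prod_{r=1}^M\tfrac{(2r-1)^2}{4}\,\prod_{r=1}^M\left(1+\tfrac{4x^2}{(2r-1)^2}\right).
\]
Setting $x=0$ gives
\[
  P_{2M}\!\left(2M,M+\tfrac12\right)=(-1)^M\prod_{r=1}^M\tfrac{(2r-1)^2}{4}\neq0.
\]
Dividing by this constant yields \eqref{eq:finite-cosh-product}.

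For \eqref{eq:cosh-polynomial-limit}, the plan is to invoke the classical Weierstrass product $\cosh\pi x=\prod_{r\ge1}\bigl(1+4x^2/(2r-1)^2\bigr)$. It converges since $\sum 1/(2r-1)^2<\infty$, uniformly on compact sets of $\mathbb C$. The limit then follows at once from \eqref{eq:finite-cosh-product}.

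For \eqref{eq:pi-polynomial-limit}, we compare coefficients of $x^{2s}$. In the finite sum, $x^{2s}$ arises from $k=M-s$ with coefficient $i^{2s}=(-1)^s$. So the coefficient of $x^{2s}$ on the left of \eqref{eq:finite-cosh-product} is $(-1)^sP_{2M-2s}/P_{2M}$. On the right it is the elementary symmetric function
\[
  e_s\!\left(\tfrac{4}{1^2},\tfrac{4}{3^2},\dots,\tfrac{4}{(2M-1)^2}\right).
\]
The key step, and the only one needing care, is passing the limit inside coefficients. We can do this in either of two ways. One is Cauchy's integral formula on a fixed circle $|x|=1$, using uniform convergence of the entire functions; coefficients of locally uniformly convergent entire functions converge. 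The other is directly: $e_s$ of the truncated sequence increases to $e_s$ of the full sequence by monotone convergence, since all terms are positive and $e_s\le (\sum a_r)^s/s!<\infty$. The limiting coefficient is that of $x^{2s}$ in $\cosh\pi x$, namely $\pi^{2s}/(2s)!$.

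Finally, $s=1$ gives
\[
  \lim_{M\to\infty}\left(-P_{2M-2}/P_{2M}\right)=\pi^2/2.
\]
Multiplying by $2$ and taking the positive square root, which is continuous, gives \eqref{eq:pi-square-polynomial-limit}. The ratio is positive, being $e_1>0$, so the square root is real.
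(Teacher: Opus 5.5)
Your proposal is correct and follows essentially the same route as the paper. The paper cites Remark~3 of the earlier work (with $n=2M+1$) for the paired product $\prod_{r=1}^{M}\bigl(z^2-(2r-1)^2/4\bigr)=\sum_{k}P_{2k}(2M,M+\frac12)z^{2M-2k}$, which you rederive from \eqref{defnrelation} via the pairing in Lemma~\ref{lem:central-vanishing}; it then substitutes $z=ix$, reads off the constant term and compares coefficients exactly as you do. Your Cauchy-formula justification of the coefficient limit is more explicit than the paper's one-line appeal to convergence of $\sum 4/(2r-1)^2$, whereas your monotone variant still needs the standard identification of $e_s(4/1^2,4/3^2,\ldots)$ with the $x^{2s}$-coefficient of the infinite product.
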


\begin{proof}
Putting $n=2M+1$ in \cite[Remark~3]{AbdulsalamMalmsten2025} gives
\[
  \prod_{r=1}^{M}
  \left(z^2-\frac{(2r-1)^2}{4}\right)
  =
  \sum_{k=0}^{M}
  P_{2k}\left(2M,M+\frac12\right)z^{2M-2k}.
\]
Substituting $z=ix$, the left-hand side becomes
\begin{equation}\label{eq:substituted-product}
  \prod_{r=1}^{M}
  \left(-x^2-\frac{(2r-1)^2}{4}\right)
  =
  (-1)^M
  \prod_{r=1}^{M}\frac{(2r-1)^2}{4}
  \prod_{r=1}^{M}
  \left(1+\frac{4x^2}{(2r-1)^2}\right).
\end{equation}
On the other hand, the constant term of
\[
  \sum_{k=0}^{M}
  P_{2k}\left(2M,M+\frac12\right)(ix)^{2M-2k}
\]
is $P_{2M}\left(2M,M+\frac12\right)$.  Hence
\begin{equation}
  P_{2M}\left(2M,M+\frac12\right)
  =
  (-1)^M\prod_{r=1}^{M}\frac{(2r-1)^2}{4}.
\end{equation}
Using \eqref{eq:substituted-product} and normalizing by this constant term gives
\eqref{eq:finite-cosh-product}.  Letting $M\to\infty$, and using
the product formula \cite[(4.36.2)]{DLMF}
\[
  \cosh \pi x
  =
  \prod_{r=1}^{\infty}
  \left(1+\frac{4x^2}{(2r-1)^2}\right),
\]
gives \eqref{eq:cosh-polynomial-limit}.  It remains to read off the
coefficients.  For fixed $s\ge1$, the coefficient of $x^{2s}$ in the finite
polynomial on the right-hand side of \eqref{eq:cosh-polynomial-limit} is
\[
  (-1)^s
  \frac{
  P_{2M-2s}\left(2M,M+\frac12\right)
  }
  {
  P_{2M}\left(2M,M+\frac12\right)
  },
\]
provided $M\ge s$.  Equivalently, this is the coefficient of $x^{2s}$ in the
finite product
\[
  \prod_{r=1}^{M}
  \left(1+\frac{4x^2}{(2r-1)^2}\right).
\]
For fixed $s$, these coefficients converge as $M\to\infty$, since
$\sum_{r\ge1}4/(2r-1)^2$ is convergent.  Hence the limiting coefficient is
the coefficient of $x^{2s}$ in the infinite product, and by the product formula
above this is the coefficient of $x^{2s}$ in
\[
  \cosh \pi x
  =
  \sum_{\ell=0}^{\infty}\frac{\pi^{2\ell}x^{2\ell}}{(2\ell)!}.
\]
This coefficient comparison gives \eqref{eq:pi-polynomial-limit}; the special
case $s=1$ gives
\eqref{eq:pi-square-polynomial-limit}.
\end{proof}

\begin{remark}
The expression under the square root in \eqref{eq:pi-square-polynomial-limit} is positive for each $M$.  Indeed, the
constant term computation in the proof shows that
\[
  \operatorname{sgn}
  P_{2M}\left(2M,M+\frac12\right)=(-1)^M.
\]
Comparing the coefficient of $x^2$ in \eqref{eq:substituted-product} similarly
shows that
\[
  \operatorname{sgn}
  P_{2M-2}\left(2M,M+\frac12\right)=(-1)^{M-1}.
\]
Therefore
\[
  -2
  \frac{
  P_{2M-2}\left(2M,M+\frac12\right)
  }
  {
  P_{2M}\left(2M,M+\frac12\right)
  }>0
\]
for every $M$.
\end{remark}
\subsection{Recurrences for the Polynomials}

The defining relation also gives recurrences for the polynomials
$P_k(m,x)$.  These are helpful both for computation and for seeing how the
Stirling-cycle structure enters the formulas.

\begin{proposition}\label{prop:P-recurrences}
Let $m\ge1$.  With the conventions
\[
  P_0(0,x)=1,\qquad P_k(m,x)=0\quad(k<0\text{ or }k>m),
\]
one has
\begin{equation}\label{eq:P-step-recurrence}
  P_k(m,x)
  =
  P_k(m-1,x)+(x-m)P_{k-1}(m-1,x),
  \qquad 0\le k\le m .
\end{equation}
Moreover,
\begin{equation}\label{eq:P-newton-recurrence}
  kP_k(m,x)
  =
  -
  \sum_{r=1}^{k}
  \left(\sum_{s=1}^{m}(s-x)^r\right)P_{k-r}(m,x),
  \qquad 1\le k\le m .
\end{equation}
\end{proposition}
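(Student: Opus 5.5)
The plan is to use the polynomial identity in $y=j+x$ hidden in the defining relation \eqref{defnrelation}. Since $(j+1)_m=\prod_{s=1}^{m}(j+s)$ and $j+s=y+s-x$, the relation says that, as a polynomial identity in $y$ (both sides agree at infinitely many $y$),
\[
  \prod_{s=1}^{m}\bigl(y+s-x\bigr)
  =
  \sum_{k=0}^{m}(-1)^kP_k(m,x)\,y^{m-k}.
\]
Put $a_s=x-s$. Then each factor is $y-a_s$, and comparing coefficients gives $(-1)^kP_k(m,x)=(-1)^k e_k(a_1,\ldots,a_m)$. Thus
\[
  P_k(m,x)=e_k(x-1,x-2,\ldots,x-m),
\]
the $k$th elementary symmetric polynomial. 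This matches \eqref{eq:P-diagonal} when $k=m$. With the stated conventions it also holds for $m=0$ and for $k$ outside $[0,m]$. This identification is the whole key; both recurrences are then standard symmetric-function facts.

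For \eqref{eq:P-step-recurrence}, I would write $\prod_{s=1}^{m}(y-a_s)=(y-a_m)\prod_{s=1}^{m-1}(y-a_s)$. Then I compare the coefficients of $y^{m-k}$ after substituting the expansions for $m$ and $m-1$. This gives $(-1)^kP_k(m,x)=(-1)^kP_k(m-1,x)-a_m(-1)^{k-1}P_{k-1}(m-1,x)$, that is, $e_k^{(m)}=e_k^{(m-1)}+a_m e_{k-1}^{(m-1)}$ with $a_m=x-m$. The edge cases $k=0$ and $k=m$ follow from the conventions $P_{-1}=0$ and $P_m(m-1,x)=0$. This step is immediate.

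For \eqref{eq:P-newton-recurrence}, I would use Newton's identities $k e_k=\sum_{r=1}^{k}(-1)^{r-1}p_r e_{k-r}$, where $p_r=\sum_{s}a_s^r$. With $a_s=x-s$ we have $(-1)^{r-1}p_r=-\sum_s(s-x)^r$, which gives exactly the stated form.

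To keep the argument self-contained, I would derive Newton's identities by logarithmic differentiation. Write $Q(t)=\prod_{s}(1+a_s t)=\sum_k e_k t^k$. Then
\[
  tQ'(t)/Q(t)=\sum_s \frac{a_s t}{1+a_s t}=\sum_{r\ge1}(-1)^{r-1}p_r t^r .
\]
Multiplying by $Q(t)$ and comparing coefficients of $t^k$ for $1\le k\le m$ gives the identity.

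The only real care point is sign bookkeeping, namely matching $(-1)^k$ in \eqref{defnrelation} with the $e_k$ convention and turning $(-1)^{r-1}(x-s)^r$ into $-(s-x)^r$. I would check this on $k=1$: the recurrence gives $P_1(m,x)=-\sum_s(s-x)=\sum_s(x-s)=e_1$, which is consistent.
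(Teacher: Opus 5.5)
Your proposal is correct and follows essentially the paper's proof. Both pass to the polynomial identity in $y=j+x$, get the step recurrence by splitting off the factor $(y+m-x)$, and get the second recurrence from Newton's identities after identifying $P_k(m,x)$ with an elementary symmetric polynomial. The only differences are cosmetic: you work with the variables $x-s$ where the paper uses $(-1)^k e_k(1-x,\ldots,m-x)$, which is equivalent by homogeneity, and you derive Newton's identities by logarithmic differentiation instead of citing Macdonald.
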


\begin{proof}
Put $y=j+x$ in the defining relation \eqref{defnrelation}.  Then
\[
  \prod_{i=1}^{m}(y+i-x)
  =
  \sum_{k=0}^{m}(-1)^kP_k(m,x)y^{m-k}.
\]
Since
\[
  \prod_{i=1}^{m}(y+i-x)
  =
  (y+m-x)\prod_{i=1}^{m-1}(y+i-x),
\]
comparison of the coefficient of $y^{m-k}$ gives
\eqref{eq:P-step-recurrence}.

For \eqref{eq:P-newton-recurrence}, write
\[
  \prod_{i=1}^{m}(y+i-x)
  =
  \sum_{k=0}^{m}e_k(1-x,2-x,\ldots,m-x)y^{m-k}.
\]
Here $e_k$ denotes the elementary symmetric polynomial; see \cite[Ch.~I, (2.2), p.~19]{Macdonald1995}.  Thus
\begin{equation}\label{eq:P-elementary-symmetric}
  P_k(m,x)=(-1)^ke_k(1-x,2-x,\ldots,m-x).
\end{equation}
Newton's identities
\cite[Ch.~I, (2.11'), p.~23]{Macdonald1995} give
\[
  ke_k
  =
  \sum_{r=1}^{k}(-1)^{r-1}p_r e_{k-r},
  \qquad
  p_r=\sum_{s=1}^{m}(s-x)^r,
\]
which is precisely \eqref{eq:P-newton-recurrence}.
\end{proof}

\begin{remark}
The recurrence \eqref{eq:P-step-recurrence} may also be recovered directly from
the usual recurrence for the Stirling cycle numbers \cite[(26.8.18)]{DLMF},
\begin{equation}\label{stirrlhaf1}
  \stirlingone{n}{k}
  =
  \stirlingone{n-1}{k-1}
  +(n-1)\stirlingone{n-1}{k}.
\end{equation}
Indeed, applying this relation to the Stirling cycle number in
\eqref{eq:P-explicit} gives, for $k<m$,
\[
  P_k(m,x)
  =
  P_k(m-1,x)
  +
  \frac{x}{m-k}\frac{\dd }{\dd x}P_k(m-1,x)
  -
  mP_{k-1}(m-1,x).
\]
The derivative term comes from differentiating the defining equation
\eqref{defnrelation} with respect to $x$, which gives
\begin{equation}\label{deriveqnhaf1}
  \frac{\dd}{\dd x}P_k(m-1,x)
  =
  (m-k)P_{k-1}(m-1,x),
\end{equation}
and therefore the preceding identity reduces to
\eqref{eq:P-step-recurrence}.
\end{remark}

\begin{proposition}[gamma--polygamma formulas]
\label{prop:near-diagonal-gamma}
Let $M\ge1$ and $0\le s\le M$.  For $x\notin\{1,2,\ldots,M\}$,
\begin{equation}\label{eq:near-diagonal-esym}
  P_{M-s}(M,x)
  =
  \frac{\Gamma(x)}{\Gamma(x-M)}
  e_s\!\left(
  \frac1{x-1},\frac1{x-2},\ldots,\frac1{x-M}
  \right).
\end{equation}
Equivalently, writing each partition of $s$ as
$\mu=(1^{m_1}2^{m_2}\cdots)$ and putting
$z_\mu=\prod_{j\geq1} j^{m_j}m_j!$,
\begin{equation}\label{eq:near-diagonal-partition}
  P_{M-s}(M,x)
  =
  \frac{\Gamma(x)}{\Gamma(x-M)}
  \sum_{\mu\vdash s}
  \frac1{z_\mu}
  \prod_{j\geq1}
  \left(
  \frac{\psi_{j-1}(x)-\psi_{j-1}(x-M)}{(j-1)!}
  \right)^{m_j}.
\end{equation}
At $x\in\{1,\ldots,M\}$, the identities are understood by taking the removable
limit on the right-hand side.
In particular,
\begin{equation}\label{eq:near-diagonal-first}
  P_{M-1}(M,x)
  =
  \frac{\Gamma(x)}{\Gamma(x-M)}
  \left[\psi(x)-\psi(x-M)\right],
\end{equation}
and, for $M\ge2$,
\begin{equation}\label{eq:near-diagonal-second}
  P_{M-2}(M,x)
  =
  \frac{\Gamma(x)}{\Gamma(x-M)}
  \frac12
  \left[
  \left(\psi(x)-\psi(x-M)\right)^2
  +
  \psi_1(x)-\psi_1(x-M)
  \right].
\end{equation}
Thus, taking $M=m+1$ in \eqref{eq:near-diagonal-first} recovers the formula
\eqref{digammap}.
\end{proposition}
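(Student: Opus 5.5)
The plan is to start from the elementary-symmetric description \eqref{eq:P-elementary-symmetric} from the proof of Proposition~\ref{prop:P-recurrences}:
\[
  P_k(M,x)=(-1)^k e_k(1-x,\ldots,M-x)=e_k(x-1,\ldots,x-M).
\]
Put $a_i=x-i$. For $x\notin\{1,\ldots,M\}$ all $a_i\neq0$. We then use the standard complementation identity
\[
  e_{M-s}(a_1,\ldots,a_M)=\Bigl(\prod_{i=1}^M a_i\Bigr)\, e_s\bigl(a_1^{-1},\ldots,a_M^{-1}\bigr).
\]
It holds because each $(M-s)$-subset product equals the full product divided by the product over the complementary $s$-subset. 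The full product is
\[
  \prod_{i=1}^M (x-i)=(x-1)(x-2)\cdots(x-M)=\frac{\Gamma(x)}{\Gamma(x-M)},
\]
which follows from iterating $\Gamma(z+1)=z\Gamma(z)$; this is also consistent with \eqref{eq:P-diagonal}. This gives \eqref{eq:near-diagonal-esym}.

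For \eqref{eq:near-diagonal-partition}, express $e_s$ through power sums. The standard formula is
\[
  e_s=\sum_{\mu\vdash s}\varepsilon_\mu z_\mu^{-1}p_\mu,\qquad \varepsilon_\mu=(-1)^{s-\ell(\mu)},
\]
from Macdonald, Ch.~I, (2.14'). The signs are absorbed by evaluating the power sums of the variables $-1/(x-i)$, or equivalently by noting that $\varepsilon_\mu=\prod_j(-1)^{(j-1)m_j}$.

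So we need the power sums $p_j=\sum_{i=1}^M (x-i)^{-j}$. Start from the polygamma relation $\psi_{j-1}(z+1)-\psi_{j-1}(z)=(-1)^{j-1}(j-1)!\,z^{-j}$. Telescoping from $z=x-M$ to $z=x-1$ gives
\[
  \psi_{j-1}(x)-\psi_{j-1}(x-M)=(-1)^{j-1}(j-1)!\sum_{i=1}^M (x-i)^{-j}.
\]
Hence $p_j$ equals $(-1)^{j-1}\bigl(\psi_{j-1}(x)-\psi_{j-1}(x-M)\bigr)/(j-1)!$. The factor $(-1)^{j-1}$ raised to $m_j$ is exactly $\varepsilon_\mu$, so the two signs cancel and the partition formula results.

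The main point needing care is this sign bookkeeping. It is simplest to check it on $s=1$, which gives \eqref{eq:near-diagonal-first}: here $e_1=p_1=\psi(x)-\psi(x-M)$. The case $s=2$ gives \eqref{eq:near-diagonal-second} via $e_2=(p_1^2-p_2)/2$ with $p_2=-(\psi_1(x)-\psi_1(x-M))$.

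At $x\in\{1,\ldots,M\}$ the left side is a polynomial. The right side of \eqref{eq:near-diagonal-esym} equals $e_{M-s}(x-1,\ldots,x-M)$ identically on the complement of these points, so it extends continuously, and the removable limit agrees with the polynomial. Setting $M=m+1$ and $s=1$ recovers \eqref{digammap}.
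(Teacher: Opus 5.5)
Your proposal is correct and matches the paper's proof step for step. The shared steps are the rewrite $P_k(M,x)=e_k(x-1,\ldots,x-M)$ from \eqref{eq:P-elementary-symmetric} and the complementation identity $e_{M-s}(a)=a_1\cdots a_M\,e_s(1/a)$ with $\prod_{r=1}^M(x-r)=\Gamma(x)/\Gamma(x-M)$. They continue with the power-sum expansion of $e_s$, the polygamma evaluation of $p_j$, and the sign cancellation via $\sum_j(j-1)m_j=s-\ell(\mu)$. The only differences are cosmetic: you justify complementation by complementary subsets rather than the generating function, and you spell out the telescoping of the differentiated digamma recurrence and the removable-limit argument, which the paper states only briefly.
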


\begin{proof}
By \eqref{eq:P-elementary-symmetric},
\begin{equation}\label{eqaftwopnine}
  P_k(M,x)
  =
  (-1)^ke_k(1-x,2-x,\ldots,M-x)
  =
  e_k(x-1,x-2,\ldots,x-M),
\end{equation}
because $e_k$ is homogeneous of degree $k$ (replacing each entry $r-x$ by
$-(x-r)$ contributes the factor $(-1)^k$).  Now take $k=M-s$ and use the 
elementary-symmetric identity
\[
  e_{M-s}(a_1,\ldots,a_M)
  =
  a_1a_2\cdots a_M\,
  e_s\!\left(\frac1{a_1},\ldots,\frac1{a_M}\right),
\]
which follows from the generating function for elementary symmetric functions
\cite[Ch.~I, (2.2), p.~19]{Macdonald1995}.  With $a_r=x-r$, this gives
\[
  P_{M-s}(M,x)
  =
  \prod_{r=1}^{M}(x-r)\,
  e_s\!\left(
  \frac1{x-1},\frac1{x-2},\ldots,\frac1{x-M}
  \right).
\]
Finally,
\[
  \prod_{r=1}^{M}(x-r)=\frac{\Gamma(x)}{\Gamma(x-M)}
\]
by the gamma recurrence \cite[(5.5.1)]{DLMF}.  This proves
\eqref{eq:near-diagonal-esym}.  To obtain the partition form, recall the
standard expansion
\[
  e_s=\sum_{\mu\vdash s}
  (-1)^{s-\ell(\mu)}
  z_\mu^{-1}p_\mu
\]
of the elementary symmetric function in terms of power sums, where
$p_\mu=\prod_{j\geq1} p_j^{m_j}$; see
\cite[Ch.~I, (2.14)--(2.14'), pp.~24--25]{Macdonald1995}.  For the variables
$1/(x-1),\ldots,1/(x-M)$, the corresponding power sums are
\begin{equation}\label{eq:reciprocal-power-polygamma}
  p_j=\sum_{r=1}^{M}\frac1{(x-r)^j}
  =
  \frac{(-1)^{j-1}}{(j-1)!}
  \left[\psi_{j-1}(x)-\psi_{j-1}(x-M)\right],
\end{equation}
by successive differentiation of the digamma recurrence
\cite[(5.5.2)]{DLMF}; see also \cite[\S5.15]{DLMF}.  Since
\[
  \sum_{j\ge1}(j-1)m_j=s-\ell(\mu),
\]
the signs from the power sums cancel the factor
$(-1)^{s-\ell(\mu)}$, giving \eqref{eq:near-diagonal-partition}.

The first two cases now follow by taking $j=1,2$ in
\eqref{eq:reciprocal-power-polygamma}.  The case $s=1$ gives
\eqref{eq:near-diagonal-first}.  For $s=2$, using $e_2=(p_1^2-p_2)/2$, 
which is the $s=2$ instance of the partition expansion above, gives
\eqref{eq:near-diagonal-second}.
\end{proof}

\subsection{An Application to Barnes Zeta Residues}

The same polynomial structure has a natural interpretation in the theory of
Barnes' multiple zeta function.  Adamchik's finite reduction lemma
\cite[(22)]{Adamchik2005} states that
\begin{equation}\label{adamh1haf}
  \zeta_n(s,x)
  =
  \frac1{(n-1)!}
  \sum_{j=0}^{n-1}
  P^A_{j,n}(x)\zeta(s-j,x),
\end{equation}
where $\zeta(s,x)$ is the Hurwitz zeta function and the coefficients
$P^A_{j,n}(x)$ are defined by \cite[p.~7]{Adamchik2005}
\[
  \prod_{r=1}^{n-1}(y+r-x)
  =
  \sum_{j=0}^{n-1}P^A_{j,n}(x)y^j .
\]
Using the coefficient identification in
\cite[p.~7]{AbdulsalamMalmsten2025}, we have
\[
  P^A_{j,m+1}(x)=(-1)^{m-j}P_{m-j}(m,x),
  \qquad 0\le j\le m .
\]
We therefore obtain the following interpretation of the signed generalized
Stirling polynomials.

\begin{proposition}\label{prop:barnes-residue-characterization}
For $0\le k\le m$,
\begin{equation}\label{eq:P-residue-characterization}
  P_k(m,x)
  =
  (-1)^k m!\operatorname*{Res}_{s=m+1-k}\zeta_{m+1}(s,x).
\end{equation}
\end{proposition}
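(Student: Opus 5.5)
We apply Adamchik's reduction \eqref{adamh1haf} with $n=m+1$:
\[
  \zeta_{m+1}(s,x)=\frac1{m!}\sum_{j=0}^{m}P^A_{j,m+1}(x)\,\zeta(s-j,x).
\]
Then we read off residues term by term. Each coefficient $P^A_{j,m+1}(x)$ is a polynomial in $x$, independent of $s$. The Hurwitz zeta function $\zeta(w,x)$ is meromorphic in $w$, with its only singularity a simple pole at $w=1$ of residue $1$; this is visible from the Laurent expansion recalled above. Consequently $\zeta(s-j,x)$ has a single simple pole, at $s=j+1$, with residue $1$. The possible poles of $\zeta_{m+1}(s,x)$ are therefore the points $s=1,2,\ldots,m+1$, and they are distinct for distinct $j$.

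Fix $0\le k\le m$ and set $s_0=m+1-k$. The only summand singular at $s_0$ is the one with $j+1=s_0$, that is, $j=m-k$. All other summands are holomorphic near $s_0$. Hence
\[
  \operatorname*{Res}_{s=m+1-k}\zeta_{m+1}(s,x)=\frac1{m!}P^A_{m-k,m+1}(x).
\]
The coefficient identification $P^A_{j,m+1}(x)=(-1)^{m-j}P_{m-j}(m,x)$ with $j=m-k$ gives $P^A_{m-k,m+1}(x)=(-1)^kP_k(m,x)$. Multiplying by $(-1)^k m!$ yields \eqref{eq:P-residue-characterization}.

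As a consistency check, compare the two product expansions. Adamchik's product $\prod_{r=1}^{m}(y+r-x)$ equals $\sum_k(-1)^kP_k(m,x)y^{m-k}$ by the form of \eqref{defnrelation} used in the proof of Proposition~\ref{prop:P-recurrences}. Matching the coefficients of $y^j$ confirms the identification directly, independent of the cited page.

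The main point requiring care is that Adamchik's identity, stated initially for $\Re s>m+1$, holds for the meromorphic continuations. This follows from uniqueness of analytic continuation, because the right-hand side is a finite combination of meromorphic functions. It also has to be checked that no cancellation of poles occurs. None can, because the poles come from distinct summands and each polynomial coefficient is either nonzero or simply produces a zero residue, which is consistent with the formula. The argument is valid for every $x$ with $\Re x>0$ and extends polynomially.
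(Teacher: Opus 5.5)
Your proof is correct and is essentially the paper's argument: apply Adamchik's reduction with $n=m+1$, note that only the $j=m-k$ summand is singular at $s=m+1-k$ (with residue $P^A_{m-k,m+1}(x)/m!$), and convert using $P^A_{m-k,m+1}(x)=(-1)^kP_k(m,x)$. Your additional remarks are sound details that the paper leaves implicit: you derive that coefficient identification directly from the product form of \eqref{defnrelation}, and you note that the reduction persists under meromorphic continuation.
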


\begin{proof}
Put $n=m+1$ in \eqref{adamh1haf}.  At $s=m+1-k$, the only term with a
pole is the one for which $s-j=1$, namely $j=m-k$.  Since the Hurwitz zeta
function has residue $1$ at $s=1$, we get
\[
  \operatorname*{Res}_{s=m+1-k}\zeta_{m+1}(s,x)
  =
  \frac{P^A_{m-k,m+1}(x)}{m!}
  =
  \frac{(-1)^kP_k(m,x)}{m!},
\]
which is \eqref{eq:P-residue-characterization}.
\end{proof}

The residues of $\zeta_{N}(s,x)$ may also be described by the general Bernoulli-Barnes
residue formula \cite[(3.9)]{Ruijsenaars2000}.  The point of
\eqref{eq:P-residue-characterization} is that it realizes the polynomials
$P_k(m,x)$ as signed Barnes residues.  This identification allows the
identities proved for $P_k(m,x)$ in this work to be read as identities for
$\operatorname*{Res}_{s=m+1-k}\zeta_{m+1}(s,x)$.  For instance, the central
vanishing lemma (Lemma~\ref{lem:central-vanishing}) implies that, for every
odd $k$,
\[
  \operatorname*{Res}_{s=m+1-k}
  \zeta_{m+1}\!\left(s,\frac{m+1}{2}\right)=0.
\]
For the even case, if $1\le q\le n$ and $n-q$ is even, then
\[
  \operatorname*{Res}_{s=q}\zeta_n\!\left(s,\frac n2\right)
  =
  \frac{1}{(n-1)!}
  P_{n-q}\!\left(n-1,\frac n2\right).
\]

\begin{remark}\label{rem:residue-differentiation}
Differentiating Barnes' zeta function with respect to $x$ gives
\[
  \frac{\partial}{\partial x}\zeta_{m+1}(s,x)
  =
  -s\zeta_{m+1}(s+1,x).
\]
Taking the residue at $s=m+1-k$ gives
\begin{equation}\label{eq:residue-differentiation}
  \frac{\dd}{\dd x}
  \operatorname*{Res}_{s=m+1-k}\zeta_{m+1}(s,x)
  =
  -(m+1-k)
  \operatorname*{Res}_{s=m+2-k}\zeta_{m+1}(s,x),
\end{equation}
where the right-hand side is interpreted as $0$ when $k=0$.  By
\eqref{eq:P-residue-characterization}, this is equivalent to
\begin{equation}\label{eq:P-differentiation-from-zeta}
  \frac{\dd}{\dd x}P_k(m,x)
  =
  (m+1-k)P_{k-1}(m,x).
\end{equation}
Replacing $m$ by $m-1$ in \eqref{eq:P-differentiation-from-zeta}
recovers \eqref{deriveqnhaf1}.
\end{remark}

\begin{proposition}[reflection formula]\label{prop:residue-reflection}
For $0\le k\le m$,
\begin{equation}\label{eq:residue-reflection}
  \operatorname*{Res}_{s=m+1-k}
  \zeta_{m+1}(s,m+1-x)
  =
  (-1)^k
  \operatorname*{Res}_{s=m+1-k}\zeta_{m+1}(s,x).
\end{equation}
Equivalently,
\begin{equation}\label{eq:P-reflection}
  P_k(m,m+1-x)=(-1)^kP_k(m,x).
\end{equation}
\end{proposition}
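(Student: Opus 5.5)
The plan is to prove the polynomial form \eqref{eq:P-reflection} first and then deduce \eqref{eq:residue-reflection} from Proposition~\ref{prop:barnes-residue-characterization}. Multiplying \eqref{eq:P-reflection} by $(-1)^k/m!$ and applying \eqref{eq:P-residue-characterization} at the points $x$ and $m+1-x$ converts it into \eqref{eq:residue-reflection}. The same step run backwards gives the converse, so the two statements are equivalent.

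For \eqref{eq:P-reflection}, I will use the elementary symmetric description \eqref{eq:P-elementary-symmetric}, in the form \eqref{eqaftwopnine}:
\[
  P_k(m,x)=e_k(x-1,x-2,\ldots,x-m).
\]
Replacing $x$ by $m+1-x$ turns the arguments into $(m+1-x)-r=(m+1-r)-x$ for $r=1,\ldots,m$. As $r$ runs over $\{1,\ldots,m\}$, so does $r'=m+1-r$. Hence the multiset of arguments is $\{r'-x: 1\le r'\le m\}$, which is $\{-(x-r')\}$. By symmetry of $e_k$ together with homogeneity of degree $k$, we get
\[
  P_k(m,m+1-x)=e_k\bigl(-(x-1),\ldots,-(x-m)\bigr)=(-1)^ke_k(x-1,\ldots,x-m)=(-1)^kP_k(m,x).
\]
The same argument can be phrased with generating functions: write $\prod_{r=1}^m(y+r-x)=\sum_k(-1)^kP_k(m,x)y^{m-k}$, substitute $x\mapsto m+1-x$, reindex $r\mapsto m+1-r$, and compare with the product $\prod_r(y-(r-x))$ obtained from $y\mapsto -y$.

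Since both sides are polynomials in $x$, the identity holds for all $x$ and there are no exceptional points. There is no real obstacle here. The only care needed is bookkeeping: the residue statement should be read where $\zeta_{m+1}(\cdot,x)$ and $\zeta_{m+1}(\cdot,m+1-x)$ are defined, that is, for $\Re x>0$ and $\Re(m+1-x)>0$ initially, or via the continuation in $x$. Even this is harmless, because \eqref{eq:P-residue-characterization} expresses each residue as a polynomial in $x$, so both sides of \eqref{eq:residue-reflection} extend polynomially.
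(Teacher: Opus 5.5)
Your proposal is correct and is essentially the paper's argument. The paper sets $Q_m(y;x)=\prod_{j=1}^{m}(y+j-x)$, uses the reindexing $j\mapsto m+1-j$ to get $Q_m(y;m+1-x)=(-1)^mQ_m(-y;x)$, compares coefficients of $y^{m-k}$, and then passes to residues via \eqref{eq:P-residue-characterization}; this is exactly your generating-function variant, and your $e_k$-homogeneity version is the same computation done coefficientwise.
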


\begin{proof}
Let
\[
  Q_m(y;x)=\prod_{j=1}^{m}(y+j-x).
\]
Then
\begin{equation}\label{usedhere}
  Q_m(y;m+1-x)=(-1)^mQ_m(-y;x).
\end{equation}
Using the expansion
\[
  Q_m(y;x)=\sum_{k=0}^{m}(-1)^kP_k(m,x)y^{m-k}
\]
in \eqref{usedhere}, we compare coefficients and obtain  \eqref{eq:P-reflection}.  The residue form follows from
\eqref{eq:P-residue-characterization}. 
\end{proof}

\subsection{Cycle-Number Consequences}

We record some consequences for Stirling cycle numbers.  The
closest formulas in \cite[\S26.8]{DLMF}, translated into the notation used
here, are as follows.  Formula \cite[(26.8.27)]{DLMF} is equivalent to
\[
  \stirlingone{n}{n-k}
  =
  \sum_{j=0}^{k}
  (-1)^{k+j}
  \binom{n-1+j}{k+j}
  \binom{n+k}{k-j}
  \left\{\begin{matrix}k+j\\j\end{matrix}\right\}.
\]
This evaluates one cycle number, but it uses Stirling numbers of the second
kind.  Corollary~\ref{cor:central-cycle-band} gives instead a relation among cycle numbers only. 
Formula \cite[(26.8.28)]{DLMF} is equivalent to
\[
  \sum_{k=1}^{n}(-1)^{n-k}\stirlingone{n}{k}=0,
\]
which is a full-row cancellation.  By contrast, \eqref{eq:central-cycle-band}
is a weighted cancellation among
\[
  \stirlingone{n}{n-h},\stirlingone{n}{n-h+1},\ldots,\stirlingone{n}{n}.
\]
Formula \cite[(26.8.30)]{DLMF} is equivalent to
\[
  \sum_{j=k}^{n}
  (-1)^{j-k}\stirlingone{n+1}{j+1}n^{j-k}
  =
  \stirlingone{n}{k}.
\]
This is the closest analogue of Corollary~\ref{cor:integer-shift-cycle}, but
\eqref{eq:integer-shift-cycle} contains the binomial
factor.

The tables in Concrete Mathematics \cite[Tables~250--251]{GrahamKnuthPatashnik1994}
are also related.  Table~250 contains the special value
$\stirlingone{n}{n-1}=\binom n2$, which is the case $h=1$ of
\eqref{eq:central-cycle-band}.  Table~251 gives finite sums involving
Stirling numbers of both kinds.  However, those formulas do not give the
cancellation identities in \eqref{eq:central-cycle-band} or the weighted binomial
transform \eqref{eq:integer-shift-cycle}.  Thus the identities below may be viewed as analogues of classical Stirling-number formulas, although the specific forms \eqref{eq:central-cycle-band} and \eqref{eq:integer-shift-cycle} do not appear to be recorded in \cite[\S26.8]{DLMF} or \cite[Tables~250--251]{GrahamKnuthPatashnik1994}.

\begin{corollary}[cancellation identities]\label{cor:central-cycle-band}
Let $n\ge1$, and let $h$ be an odd integer satisfying $1\le h\le n-1$.  Then
\begin{equation}\label{eq:central-cycle-band}
  \sum_{r=0}^{h}
  (-1)^r2^{h-r}n^r
  \binom{n+r-h-1}{n-h-1}
  \stirlingone{n}{n-h+r}
  =
  0 .
\end{equation}
\end{corollary}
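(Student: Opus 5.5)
The plan is to specialize the explicit formula \eqref{eq:P-explicit} to the central point and then invoke Lemma~\ref{lem:central-vanishing}. First I will put $m=n-1$, $k=h$ and $x=n/2$ in \eqref{eq:P-explicit}. The hypothesis $1\le h\le n-1$ puts $k$ in the admissible range $0\le k\le m$, and $m-k=n-1-h\ge0$. Since $x=n/2\neq0$, the formula applies directly and no separate $x=0$ reading is needed. This gives
\[
  P_h\!\left(n-1,\frac n2\right)
  =
  \sum_{r=0}^{h}
  (-1)^{h-r}\left(\frac n2\right)^r
  \binom{r+n-1-h}{n-1-h}
  \stirlingone{n}{r+n-h}.
\]
Here the upper index of the cycle number is $m+1=n$ and the lower one is $r+m-k+1=n-h+r$.

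Next I will use Lemma~\ref{lem:central-vanishing}. Because $h$ is odd and $0\le h\le n-1$, the left-hand side vanishes. I then multiply the identity by the nonzero constant $(-1)^h2^h$. The sign becomes $(-1)^{h-r}(-1)^h=(-1)^r$, and the power becomes $2^h(n/2)^r=2^{h-r}n^r$. Finally, $\binom{r+n-1-h}{n-1-h}=\binom{n+r-h-1}{n-h-1}$, so the resulting sum is exactly the left-hand side of \eqref{eq:central-cycle-band}, and it equals $0$.

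There is no real obstacle here. The only points needing care are matching the indices of the cycle number and binomial coefficient in \eqref{eq:P-explicit} with those in \eqref{eq:central-cycle-band}, and tracking the signs. As a sanity check, I will verify the case $h=1$. It reduces to $2\stirlingone{n}{n-1}-n(n-1)\stirlingone{n}{n}=0$, which is the known value $\stirlingone{n}{n-1}=\binom n2$.
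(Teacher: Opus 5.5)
Your proposal is correct and follows the paper's own proof exactly: specialize \eqref{eq:P-explicit} at $m=n-1$, $k=h$, $x=n/2$, invoke Lemma~\ref{lem:central-vanishing} for odd $h$, and multiply by $(-1)^h2^h$. Your index and sign bookkeeping and your $h=1$ check also match what the paper records.
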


\begin{proof}
Put $m=n-1$, $k=h$, and $x=n/2$ in the explicit formula
\eqref{eq:P-explicit}.  Lemma~\ref{lem:central-vanishing} gives
$P_h(n-1,n/2)=0$ for odd $h$.  Multiplying the resulting identity by
$(-1)^h2^h$ gives \eqref{eq:central-cycle-band}.
\end{proof}

The first case, $h=1$, gives $2\stirlingone{n}{n-1}-n(n-1)\stirlingone{n}{n}=0$.  Thus one recovers the identity
$\stirlingone{n}{n-1}=\binom n2$; see \cite[(26.8.16)]{DLMF} or \cite[Table~250]{GrahamKnuthPatashnik1994}. 

A second family is obtained by evaluating the formula at the
integer point $x=M+1$.  Since
\[
  M!\,e_s\!\left(1,\frac12,\ldots,\frac1M\right)
  =
  \stirlingone{M+1}{s+1},
\]
this gives the following weighted binomial transform of the tail
$
  \stirlingone{M+1}{s+1},\ldots,\stirlingone{M+1}{M+1}
$
of the $(M+1)$-st row.

\begin{corollary}[weighted binomial transform]\label{cor:integer-shift-cycle}
For $M\ge1$ and $0\le s\le M$,
\begin{equation}\label{eq:integer-shift-cycle}
  \sum_{r=0}^{M-s}
  (-1)^{r-M+s}
  (M+1)^r
  \binom{r+s}{s}
  \stirlingone{M+1}{r+s+1}
  =
  \stirlingone{M+1}{s+1}.
\end{equation}
Equivalently,
\[
  \sum_{j=0}^{M}
  (-1)^{j}
  (M+1)^j
  \binom{j}{s}
  \stirlingone{M+1}{j+1}
  =
  (-1)^M (M+1)^s\stirlingone{M+1}{s+1}.
\]
\end{corollary}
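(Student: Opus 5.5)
The plan is to evaluate $P_{M-s}(M,M+1)$ in two ways: once from the explicit formula \eqref{eq:P-explicit}, and once from the elementary-symmetric form \eqref{eq:near-diagonal-esym} of Proposition~\ref{prop:near-diagonal-gamma}.

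First, put $m=M$, $k=M-s$ and $x=M+1$ in \eqref{eq:P-explicit}. Since $m-k=s$, this gives
\[
  P_{M-s}(M,M+1)=\sum_{r=0}^{M-s}(-1)^{M-s-r}(M+1)^r\binom{r+s}{s}\stirlingone{M+1}{r+s+1}.
\]
Because $(-1)^{M-s-r}=(-1)^{r-M+s}$, this is exactly the left-hand side of \eqref{eq:integer-shift-cycle}.

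Second, the point $x=M+1$ is not in $\{1,\ldots,M\}$, so \eqref{eq:near-diagonal-esym} applies directly. Equivalently, one can use \eqref{eqaftwopnine} together with the reciprocal identity for $e_{M-s}$. The product is $\prod_{r=1}^M(M+1-r)=M!$, and the reciprocals $1/(M+1-r)$ run over $1,1/2,\ldots,1/M$. Hence
\[
  P_{M-s}(M,M+1)=M!\,e_s\!\left(1,\tfrac12,\ldots,\tfrac1M\right).
\]

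It remains to identify this quantity with $\stirlingone{M+1}{s+1}$. Here $M!\,e_s(1,\ldots,1/M)=e_{M-s}(1,\ldots,M)$. The rising factorial $y(y+1)\cdots(y+M)=\sum_k\stirlingone{M+1}{k}y^k$ shows that the coefficient of $y^{s}$ in $\prod_{i=1}^M(y+i)$ is $\stirlingone{M+1}{s+1}$. That coefficient equals $e_{M-s}(1,\ldots,M)$, which proves \eqref{eq:integer-shift-cycle}. This identification is the only step needing care, and it is just the generating function for cycle numbers.

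For the equivalent form, reindex with $j=r+s$ and multiply by $(-1)^{M}(M+1)^s$. The terms $0\le j<s$ vanish because $\binom{j}{s}=0$, so those terms can be added freely. Tracking the signs gives $(-1)^{j}$ in the summand and $(-1)^M(M+1)^s$ on the right, which is the stated form.
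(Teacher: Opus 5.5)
Your proposal is correct and follows the paper's proof. You evaluate $P_{M-s}(M,M+1)$ both via \eqref{eq:near-diagonal-esym} and via \eqref{eq:P-explicit}, identify $M!\,e_s(1,\tfrac12,\ldots,\tfrac1M)$ with $\stirlingone{M+1}{s+1}$ from the rising-factorial generating function, and reindex with $j=r+s$ for the second form. The only cosmetic difference is that you pass through $e_{M-s}(1,\ldots,M)$ and the coefficient of $y^s$ in $\prod_{i=1}^M(y+i)$, whereas the paper factors $xM!\prod_{j}(1+x/j)$ and compares coefficients of $x^{s+1}$.
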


\begin{proof}
Evaluate \eqref{eq:near-diagonal-esym} at $x=M+1$, and then use
\eqref{eq:P-explicit} with $k=M-s$.  The identity
$M!e_s(1,1/2,\ldots,1/M)=\stirlingone{M+1}{s+1}$ follows from the
generating polynomial for the Stirling cycle numbers.  Indeed,
\[
  x(x+1)\cdots(x+M)
  =
  \sum_{\ell=0}^{M}\stirlingone{M+1}{\ell+1}x^{\ell+1}
\]
and, after factoring,
\[
  xM!\prod_{j=1}^{M}\left(1+\frac{x}{j}\right)
  =
  xM!\sum_{\ell=0}^{M}e_\ell\!\left(1,\frac12,\ldots,\frac1M\right)x^\ell.
\]
Comparing the coefficient of $x^{s+1}$ in these two expansions gives this
value, and hence \eqref{eq:integer-shift-cycle}.
The second form is obtained from \eqref{eq:integer-shift-cycle} by putting
$j=r+s$ and then multiplying both sides by $(-1)^M (M+1)^s$; the terms with $j<s$
vanish because $\binom{j}{s}=0$.
\end{proof}

\begin{remark}
Cereceda defines the second $r$-Stirling polynomial of the first kind by
\cite[p.~7, (23)]{Cereceda2022Axioms}
\[
  \overline R_{m,i}(x)
  =
  \sum_{j=0}^{m-i}
  \binom{i+j}{i}
  \stirlingone{m+1}{i+j+1}x^j .
\]
Comparing this formula with \eqref{eq:P-explicit} gives, for $0\le s\le M$,
\begin{equation}\label{eq:P-Rbar-identification}
  P_{M-s}(M,x)=(-1)^{M-s}\overline R_{M,s}(-x).
\end{equation}
For nonnegative integers $r$, the same article gives
\cite[p.~7]{Cereceda2022Axioms}
\begin{equation}\label{eq:Rbar-r-stirling-specialization}
  \overline R_{m,i}(r)=\rstirlingone{m+r+1}{i+r+1}{r+1},
\end{equation}
where $\rstirlingone{n}{k}{r}$ denotes the $r$-Stirling number of the first
kind, that is, the number of permutations of $\{1,\ldots,n\}$ with $k$ cycles
in which $1,\ldots,r$ lie in distinct cycles.  Broder introduced and studied
these numbers systematically in
\cite{Broder1984}.  The triangular recurrence \cite[(7)]{Broder1984} is
\[
  \rstirlingone{n}{k}{r}
  =
  \rstirlingone{n-1}{k-1}{r}
  +(n-1)\rstirlingone{n-1}{k}{r},
  \qquad n>r,
\]
with $\rstirlingone{r}{r}{r}=1$, $\rstirlingone{r}{k}{r}=0$ for $k\ne r$,
and $\rstirlingone{n}{k}{r}=0$ for $n<r$.  This is the $r$-analogue of
\eqref{stirrlhaf1}.  For $n\ge1$, the cases $r=0$ and $r=1$ reduce to the
ordinary cycle numbers \cite[(5), (6)]{Broder1984}
\[
  \rstirlingone{n}{k}{0}=\stirlingone{n}{k},
  \qquad
  \rstirlingone{n}{k}{1}=\stirlingone{n}{k}.
\]
Also,
\cite[(24)]{Broder1984} gives, for $n\ge r$,
\[
  \prod_{j=r}^{n-1}(z+j)
  =
  \sum_{k=r}^{n}\rstirlingone{n}{k}{r}z^{k-r},
\]
and the exponential generating function is
\cite[Remark 10]{Broder1984}
\[
  \sum_{N=0}^{\infty}\rstirlingone{N+r}{m+r}{r}\frac{z^N}{N!}
  =
  \frac{1}{m!}(1-z)^{-r}
  \left(\log\frac{1}{1-z}\right)^m .
\]
For rook-theoretic elliptic analogues of the Stirling numbers of the first kind
and of their $r$-restricted versions, see Schlosser and Yoo
\cite{SchlosserYoo2017EJC,SchlosserYoo2017AAM}.

Combining
\eqref{eq:P-Rbar-identification} and
\eqref{eq:Rbar-r-stirling-specialization} gives, with $k=m-i$,
\[
  P_k(m,-r)
  =
  (-1)^k\rstirlingone{m+r+1}{m-k+r+1}{r+1},
  \qquad r \in \N \cup \{0\}.
\]
Cereceda also records a negative-integer evaluation of
$\overline R_{m,i}(x)$ at $x=-1$.  In our notation, this follows from
\eqref{eqaftwopnine}.  At $x=1$,
\[
  P_{m-i}(m,1)
  =
  e_{m-i}(0,-1,\ldots,1-m).
\]
Comparing the coefficient of $z^i$ in
\[
  \prod_{\ell=0}^{m-1}(z-\ell)
  =
  \sum_{\ell=0}^{m}e_{m-\ell}(0,-1,\ldots,1-m)z^\ell
  =
  \sum_{\ell=0}^{m}(-1)^{m-\ell}\stirlingone{m}{\ell}z^\ell,
\]
gives
\[
  P_{m-i}(m,1) =(-1)^{m-i}\stirlingone{m}{i}.
\]
Equivalently,
\[
  \sum_{j=0}^{m-i}
  (-1)^j
  \binom{i+j}{i}
  \stirlingone{m+1}{i+j+1}
  =
  \stirlingone{m}{i},
\]
which Cereceda \cite[p.~7]{Cereceda2022Axioms} identifies with
\cite[(6.18)]{GrahamKnuthPatashnik1994}.  Corollary~\ref{cor:integer-shift-cycle}
gives the corresponding evaluation at $x=-(M+1)$.  Indeed, multiplying
\eqref{eq:integer-shift-cycle} by $(-1)^{M-s}$ gives
\[
  \overline R_{M,s}(-(M+1))
  =
  (-1)^{M-s}\stirlingone{M+1}{s+1}.
\]
We did not find this evaluation at $x=-(M+1)$ in Cereceda's work
\cite{Cereceda2022Axioms} or in the Stirling-number tables
\cite[Tables~250--251]{GrahamKnuthPatashnik1994}.  The only explicit
evaluations of $\overline R_{m,i}(x)$ recorded in
\cite[p.~7]{Cereceda2022Axioms} are the nonnegative-integer values $x=r$
and the special value $x=-1$.
\end{remark}

\subsection{The Counting Reduction}

Let $c_j=c_j(N;l_1,\ldots,l_N)$ be the number of times $\chi_j$ occurs in
\eqref{eq:nested-sum}.  In particular, $c_j=0$ for $j<l_1$.  Thus
\begin{equation}\label{eq:coefficient-form}
  S_N(l_1,\ldots,l_N)=\sum_{j=1}^{N}c_j\chi_j .
\end{equation}

The coefficients are obtained by fixing the innermost index.  For example, if
all lower bounds are equal to two and only two summations are present, then
fixing $k_1=j$ gives $j\le k_2\le N$.  Thus $k_2$ may be chosen as
$j,j+1,\ldots,N$, so $\chi_j$ occurs $N-j+1$ times.  Hence
\[
  \sum_{k_2=2}^{N}\sum_{k_1=2}^{k_2}\chi_{k_1}
  =
  \sum_{j=2}^{N}(N-j+1)\chi_j .
\]
For three summations, fixing $k_1=j$ leaves
\[
  j\le k_2\le k_3\le N.
\]
If $k_3=j$, there is one choice for $k_2$; if $k_3=j+1$, there are two
choices; and so on.  The coefficient of $\chi_j$ is therefore
\[
  1+2+\cdots+(N-j+1)
  =
  \binom{N-j+2}{2}.
\]
In general, for a nest of depth $d$ with common lower bound two, fixing
$k_1=j$ leaves the chain
\[
  j\le k_2\le k_3\le\cdots\le k_d\le N,
\]
and hence
\[
  c_j^{(d)}
  =
  \binom{N-j+d-1}{d-1}.
\]
Therefore
\[
  \sum_{k_d=2}^{N}
  \sum_{k_{d-1}=2}^{k_d}
  \cdots
  \sum_{k_2=2}^{k_3}
  \sum_{k_1=2}^{k_2}
  \chi_{k_1}
  =
  \sum_{j=2}^{N}
  \binom{N-j+d-1}{d-1}\chi_j .
\]
For the sums used in $S_N(l_1,\ldots,l_N)$, the depth is $d=N$, and therefore
\begin{equation}\label{thiseq1h}
  c_j=\binom{2N-j-1}{N-1}
      =\binom{2N-j-1}{N-j},
  \qquad 2\le j\le N.
\end{equation}

The same argument applies to any common lower bound.  If
$l_1=\cdots=l_N=m$, then
\begin{equation}\label{eq:common-lower-bound-coefficients}
  c_j=
  \begin{cases}
  \binom{2N-j-1}{N-j},& m\le j\le N,\\
  0,&j<m .
  \end{cases}
\end{equation}
Thus
\begin{equation}\label{eq:common-lower-bound}
  \sum_{k_N=m}^{N}
  \sum_{k_{N-1}=m}^{k_N}
  \cdots
  \sum_{k_2=m}^{k_3}
  \sum_{k_1=m}^{k_2}
  \chi_{k_1}
  =
  \sum_{j=m}^{N}
  \binom{2N-j-1}{N-j}\chi_j .
\end{equation}

\begin{lemma}\label{lem:counting}
With the notation above,
\begin{equation}\label{eq:integral-counting}
  S_N(l_1,\ldots,l_N)
  =
  \int_0^\infty
  \left\{
  \left(\sum_{j=1}^{N}c_j\right)\sech x
  -
  \sum_{j=1}^{N}c_j\sech^j x
  \right\}\frac{\dd x}{x^2}.
\end{equation}
\end{lemma}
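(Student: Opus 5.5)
The plan is to start from the coefficient form \eqref{eq:coefficient-form}, $S_N(l_1,\ldots,l_N)=\sum_{j=1}^{N}c_j\chi_j$. This is a finite rearrangement: the nested sum \eqref{eq:nested-sum} has finitely many terms, and each term is one of $\chi_1,\ldots,\chi_N$. The upper index satisfies $k_1\le k_2\le\cdots\le k_N\le N$, so only $j\le N$ occurs, and $c_j$ is simply the number of admissible tuples with $k_1=j$. Next, substitute the definition \eqref{eq:chi-def} of each $\chi_j$. For $j=1$ we have $\chi_1=0$, and the integrand $(\sech x-\sech x)/x^2$ vanishes identically, so the term $c_1$ contributes $c_1\sech x-c_1\sech x=0$ inside the braces. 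Including $j=1$ in both sums of \eqref{eq:integral-counting} is therefore harmless.

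The only analytic point is interchanging the finite sum with the integral. This is legitimate once each $\chi_j$ is known to be an absolutely convergent improper integral on $(0,\infty)$. I would check this directly. Near $x=0$, we have $\sech x=1-x^2/2+O(x^4)$, so $\sech x-\sech^j x=\tfrac{j-1}{2}x^2+O(x^4)$. The integrand $(\sech x-\sech^j x)/x^2$ is therefore bounded near $0$, extending continuously with value $(j-1)/2$. Near infinity, $0\le\sech^j x\le\sech x\le 2e^{-x}$, so the integrand is $O(e^{-x}/x^2)$. Each integral thus converges absolutely, and linearity of the Lebesgue (or improper Riemann) integral for finitely many absolutely integrable functions gives
\[
  \sum_{j=1}^{N}c_j\chi_j=\int_0^\infty\sum_{j=1}^{N}c_j\,\frac{\sech x-\sech^j x}{x^2}\dd x .
\]

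Finally, I would regroup the integrand as $\bigl(\sum_j c_j\bigr)\sech x-\sum_j c_j\sech^j x$, all over $x^2$, which is exactly \eqref{eq:integral-counting}. One caution: the two pieces $\sech x/x^2$ and $\sech^j x/x^2$ must not be split into separate integrals, since each diverges at $0$. The regrouping is only a pointwise algebraic identity inside the single convergent integral. I expect no genuine obstacle beyond this bookkeeping. The real content, namely computing the $c_j$ explicitly, is deferred to the later propositions.
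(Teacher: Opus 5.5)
Your proposal is correct and follows the same route as the paper. You substitute the definition of $\chi_j$ into $S_N=\sum_j c_j\chi_j$, move the finite sum inside the integral, and collect the coefficients of the powers of $\sech x$; your check of absolute convergence near $0$ and at infinity, and your warning not to split the integrand, supply details the paper leaves implicit.
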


\begin{proof}
Equation \eqref{eq:coefficient-form}, together with \eqref{eq:chi-def}, gives
\[
  S_N(l_1,\ldots,l_N)
  =
  \sum_{j=1}^{N}c_j
  \int_0^\infty
  \frac{\sech x-\sech^j x}{x^2}\dd x .
\]
Collecting the coefficients of $\sech x,\sech^2x,\ldots,\sech^Nx$ gives
\eqref{eq:integral-counting}.
\end{proof}

\subsection{Closed Forms for the Integrals }

We now use the closed form $\F_j$ defined in \eqref{eq:Fj}, which is taken from
\cite[(56)]{AbdulsalamMalmsten2025}.
It follows from \eqref{eq:coefficient-form} that
\begin{equation}\label{eq:rhs-counting}
  S_N(l_1,\ldots,l_N)=\sum_{j=1}^{N}c_j\F_j .
\end{equation}
Combining \eqref{eq:integral-counting} and \eqref{eq:rhs-counting} gives the
identity
\begin{equation}\label{eq:master-identity}
  \int_0^\infty
  \left\{
  \left(\sum_{j=1}^{N}c_j\right)\sech x
  -
  \sum_{j=1}^{N}c_j\sech^j x
  \right\}\frac{\dd x}{x^2}
  =
  \sum_{j=1}^{N}c_j\F_j .
\end{equation}
Since $\chi_1=0$, the corresponding closed form satisfies $\F_1=0$.  The
apparent singularity at the origin is removable. Indeed, from
$\sech x=1-x^2/2+O(x^4)$ and
$\sech^j x=1-jx^2/2+O(x^4)$, the numerator in
\eqref{eq:master-identity} is
$\frac{x^2}{2}\sum_{j=1}^{N}(j-1)c_j+O(x^4)$ as $x\to 0$.

\begin{proposition}\label{prop:common-lower-bound-identity}
If $1\le m\le N$ and $l_1=\cdots=l_N=m$, then
\begin{equation}\label{eq:common-lower-bound-integral}
  \int_0^\infty
  \left\{
  \binom{2N-m}{N-m}\sech x
  -
  \sum_{j=m}^{N}\binom{2N-j-1}{N-j}\sech^j x
  \right\}\frac{\dd x}{x^2}
  =
  \sum_{j=m}^{N}\binom{2N-j-1}{N-j}\F_j .
\end{equation}
\end{proposition}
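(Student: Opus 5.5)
The plan is to specialize the master identity \eqref{eq:master-identity} to the common-lower-bound coefficients \eqref{eq:common-lower-bound-coefficients}. Only two things need checking: that these coefficients are correct, and that their total equals $\binom{2N-m}{N-m}$.

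\textbf{Step 1: the coefficients.} Fix the innermost index $k_1=j$ with $m\le j\le N$. The remaining indices must satisfy $j\le k_2\le\cdots\le k_N\le N$. This is a weakly increasing chain of $N-1$ values drawn from the $N-j+1$ integers $j,\dots,N$. By stars and bars the number of such chains is
\[
\binom{(N-j+1)+(N-1)-1}{N-1}=\binom{2N-j-1}{N-1}=\binom{2N-j-1}{N-j}.
\]
For $j<m$ the index $j$ never occurs, so $c_j=0$. Here the lower bounds on $k_2,\dots,k_N$ play no role, since $k_i\ge k_1=j\ge m$ automatically. This confirms \eqref{eq:common-lower-bound-coefficients}; the case $N=1$ is trivial.

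\textbf{Step 2: the total.} Next I would evaluate
\[
\sum_{j=m}^{N}\binom{2N-j-1}{N-j}.
\]
Substituting $i=N-j$, which runs from $0$ to $N-m$, turns this into
\[
\sum_{i=0}^{N-m}\binom{N-1+i}{i}.
\]
By the hockey-stick (parallel summation) identity this equals $\binom{N+(N-m)}{N-m}=\binom{2N-m}{N-m}$. Alternatively, the total counts all chains $m\le k_1\le\cdots\le k_N\le N$, which by stars and bars is directly $\binom{2N-m}{N}$.

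\textbf{Step 3: substitute.} Inserting these $c_j$ and this total into \eqref{eq:master-identity} gives \eqref{eq:common-lower-bound-integral}.

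It remains to justify the ingredients of the master identity. Lemma~\ref{lem:counting} only requires that each $\chi_j$ integral converge; this holds because the integrand is $O(1)$ near $0$, by the expansion noted after \eqref{eq:master-identity}, and $O(x^{-2})$ at infinity, so linearity of the integral applies. The right-hand side uses $\chi_j=\F_j$ from \cite[(56)]{AbdulsalamMalmsten2025}, together with $\F_1=0$ when $m=1$.

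The only step with any content is the binomial summation in Step~2, and it is routine. I expect no real obstacle beyond carefully matching the index shift with the hockey-stick identity.
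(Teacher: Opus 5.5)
Your proposal is correct and follows the paper's proof: the paper likewise takes the coefficients $\binom{2N-j-1}{N-j}$ from the chain count given before Lemma~\ref{lem:counting}, sums them with the hockey-stick identity after putting $r=N-j$ to obtain $\binom{2N-m}{N-m}$, and substitutes into the master identity \eqref{eq:master-identity}. Your stars-and-bars rederivation of the coefficients and your direct count $\binom{2N-m}{N}$ of all chains are correct extra checks, not a different route.
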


\begin{proof}
Summing the coefficients in \eqref{eq:common-lower-bound} and putting
$r=N-j$, the hockey-stick identity \cite[(26.3.7)]{DLMF} gives
\begin{equation}\label{hockeyhh1}
  \sum_{r=0}^{N-m}\binom{N+r-1}{r}
  =
  \binom{2N-m}{N-m}.
\end{equation}
Therefore \eqref{eq:common-lower-bound} gives
\eqref{eq:common-lower-bound-integral}.
\end{proof}

\subsection{A Family with Lower Bounds Equal to Two}

Set $l_1=\cdots=l_N=2$.  This choice removes the trivial $\chi_1$ term.  For
$N=2,3,4,5,6$, the multiplicity vectors $(c_1,\ldots,c_N)$ are respectively
\[
  (0,1),\quad (0,3,1),\quad (0,10,4,1),\quad
  (0,35,15,5,1),\quad (0,126,56,21,6,1).
\]
Substitution into \eqref{eq:master-identity} gives the following identities.
For these evaluations, the closed forms on the right were obtained
from the package command \texttt{NestedFRHS[N,ConstantArray[2,N]]}.
\begin{align}
\int_0^\infty
\left(\sech x-\sech^2x\right)\frac{\dd x}{x^2}
&=\F_2 \notag\\
&=-\frac{4G}{\pi}+\frac{14\zeta(3)}{\pi^2}. \label{eq:A2}
\end{align}

\begin{align}
\int_0^\infty
\left(4\sech x-3\sech^2x-\sech^3x\right)\frac{\dd x}{x^2}
&=3\F_2+\F_3 \notag\\
&=-\frac{14G}{\pi}
  -\frac{\pi}{2}
  +\frac{\psi_3(1/4)}{16\pi^3}
  +\frac{42\zeta(3)}{\pi^2}. \label{eq:A3}
\end{align}
\begin{align}
\int_0^\infty
\left(15\sech x-10\sech^2x-4\sech^3x-\sech^4x\right)\frac{\dd x}{x^2}
&=10\F_2+4\F_3+\F_4 \notag\\
&=-\frac{52G}{\pi}-2\pi+\frac{\psi_3(1/4)}{4\pi^3}
  +\frac{448\zeta(3)}{3\pi^2} \notag\\
&\qquad
  +\frac{124\zeta(5)}{\pi^4}. \label{eq:A4}
\end{align}

\begin{align}
\int_0^\infty
\left(\begin{aligned}
&56\sech x-35\sech^2x-15\sech^3x\\
&{}-5\sech^4x-\sech^5x
\end{aligned}\right)\frac{\dd x}{x^2}
&=35\F_2+15\F_3+5\F_4+\F_5 \notag\\
&=-\frac{385G}{2\pi}
  -\frac{33\pi}{4}
  +\frac{95\psi_3(1/4)}{96\pi^3}
  +\frac{\psi_5(1/4)}{768\pi^5} \notag\\
&\qquad
  +\frac{1610\zeta(3)}{3\pi^2}
  +\frac{620\zeta(5)}{\pi^4}. \label{eq:A5}
\end{align}

\begin{align}
\int_0^\infty
\left(\begin{aligned}
&210\sech x-126\sech^2x-56\sech^3x\\
&{}-21\sech^4x-6\sech^5x-\sech^6x
\end{aligned}\right)\frac{\dd x}{x^2}
&=126\F_2+56\F_3+21\F_4 +6\F_5+\F_6 \notag\\
&=-\frac{719G}{\pi}
  -\frac{65\pi}{2}
  +\frac{61\psi_3(1/4)}{16\pi^3}
  +\frac{\psi_5(1/4)}{128\pi^5} \notag\\
&\qquad
  +\frac{29512\zeta(3)}{15\pi^2} +\frac{2728\zeta(5)}{\pi^4}
 +\frac{762\zeta(7)}{\pi^6}. \label{eq:A6}
\end{align}
The general identity for this family is the special case $m=2$ of
\eqref{eq:common-lower-bound-integral}.

\subsection{A Staircase Family}

Now set $(l_1,l_2,\ldots,l_N)=(1,2,\ldots,N)$.  The coefficient of $\chi_1$ is
retained in the count, although its contribution is zero.  For $N=2,3,4,5,6$,
the multiplicity vectors $(c_1,\ldots,c_N)$ are respectively
\begin{equation}\label{multentries}
  (1,1),\quad (2,2,1),\quad (5,5,3,1),\quad
  (14,14,9,4,1),\quad (42,42,28,14,5,1).
\end{equation}
We observe two notable patterns. First, the leading entries
\[
1,2,5,14,42,\ldots
\]
are precisely the Catalan numbers. Second, the sums of the entries in each
vector are likewise Catalan numbers:
\[
1+1=2,\qquad
2+2+1=5,\qquad
5+5+3+1=14,
\]
\[
14+14+9+4+1=42,\qquad
42+42+28+14+5+1=132.
\]
Thus both the first entry and the total multiplicity appear to follow the Catalan sequence.

\begin{proposition}\label{prop:staircase-coefficients}
Let $l_i=i$ for $1\le i\le N$.  Then the coefficient of $\chi_j$ in
$S_N(1,2,\ldots,N)$ is
\begin{equation}\label{eq:staircase-coefficients}
  c_j=
  \frac{j}{N}
  \binom{2N-j-1}{N-1},
  \qquad 1\le j\le N.
\end{equation}
Moreover,
\begin{equation}\label{eq:staircase-catalan-sum}
  \sum_{j=1}^{N}
  \frac{j}{N}
  \binom{2N-j-1}{N-1}
  =
  C_N,
  \qquad
  C_N=\frac1{N+1}\binom{2N}{N}.
\end{equation}
\end{proposition}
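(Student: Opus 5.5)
The plan is to turn the nested sum into a lattice-path (ballot) count and then apply the cycle lemma. Fix $k_1=j$. The coefficient $c_j$ is then the number of integer chains
\[
  j=k_1\le k_2\le\cdots\le k_N\le N,\qquad k_i\ge i\quad(2\le i\le N),
\]
together with the condition $k_1\ge 1$. That last condition is automatic, since $j\ge1$.

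I would encode such a chain as a monotone lattice path. Write $k_{N+1}:=N$ and record the weakly increasing sequence $k_2,\ldots,k_N$ as a path with $N-1$ up-steps and $N-j$ right-steps, running from height $j$ to $N$. The constraint $k_i\ge i$ becomes a ballot-type condition: the path never goes strictly below the diagonal. Equivalently, put $a_i=k_{N+1-i}$. Then $N\ge a_1\ge\cdots\ge a_{N-1}\ge j$, and $a_i\ge N+1-i$.

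Counting the chains is the classical ballot problem for paths from $(0,j)$ to $(N-1,N)$ staying weakly above the line $y=x+1$. By the reflection principle, reflecting the first bad step across $y=x$, the count is
\[
  \binom{2N-j-1}{N-1}-\binom{2N-j-1}{N}.
\]
I would then simplify this expression:
\[
  \binom{2N-j-1}{N-1}\Bigl(1-\frac{N-j}{N}\Bigr)=\frac{j}{N}\binom{2N-j-1}{N-1},
\]
which is exactly \eqref{eq:staircase-coefficients}.

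As a sanity check, I would verify the formula against the vectors in \eqref{multentries}. For $N=4$, $j=2$ it gives $\tfrac24\binom51=\ldots$. The binomial here needs care, since $\binom{2N-j-1}{N-1}=\binom{5}{3}=10$, so the value is $\tfrac24\cdot10=5$, which matches. The main obstacle is getting the reflection boundary right: whether the forbidden line is $y=x$ or shifted by one, and how $k_1=j$ interacts with $k_2\ge2$. To pin this down I would test small cases before fixing the offset.

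A cleaner alternative would be induction on $N$. Summing over $k_N$ gives the recurrence $c_j(N)=\sum_{i\le j+?}$, and the closed form could be checked against it through the hockey stick identity. I would keep this as a backup route.

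For \eqref{eq:staircase-catalan-sum} there is a combinatorial argument. Summing $c_j$ over all $j$ counts all chains $1\le k_1\le\cdots\le k_N\le N$ with $k_i\ge i$. Adjoining a step gives the standard Catalan objects counted by $C_N$, namely Dyck-type sequences of length $N$. The same total also equals $c_1(N+1)=\tfrac1{N+1}\binom{2N}{N}$, because a chain for $N+1$ with $k_1=1$ corresponds, after dropping $k_1$ and shifting, to an arbitrary chain for $N$.

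Alternatively, the sum can be evaluated algebraically. Using $\tfrac{j}{N}\binom{2N-j-1}{N-1}=\binom{2N-j-1}{N-1}-\binom{2N-j-1}{N}$, the sum over $j$ reduces to two hockey-stick sums \eqref{hockeyhh1}:
\[
  \binom{2N-1}{N}-\binom{2N-1}{N+1}=\frac1{N+1}\binom{2N}{N}.
\]
I would present this algebraic computation as the proof, since it is fully routine.
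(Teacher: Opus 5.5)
Your proposal is correct and follows the paper's proof. It uses the same chain-to-lattice-path bijection: your path from $(0,j)$ to $(N-1,N)$ staying weakly above $y=x+1$ is the paper's path from $(0,j-1)$ to $(N-1,N-1)$ staying weakly above $y=x$, shifted up by one. It also uses the same reflection count $\binom{2N-j-1}{N-1}-\binom{2N-j-1}{N}$ and the same telescoped hockey-stick evaluation $\binom{2N-1}{N}-\binom{2N-1}{N+1}=C_N$ for the total. One small slip to fix: the path has $N-j$ up-steps and $N-1$ right-steps, not the reverse as you first wrote, and this is what your endpoints $(0,j)$ and $(N-1,N)$ already encode.
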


\begin{proof}
Fix $j$.  A contribution to the coefficient of $\chi_j$ is the same as a
weakly increasing chain
\[
  j=k_1\le k_2\le\cdots\le k_N=N,
  \qquad k_i\ge i.
\]
Put $n=N-1$ and $h=j-1$.  Such chains are in bijection with lattice paths from
$(0,h)$ to $(n,n)$ using north and east steps and staying weakly above the
line $y=x$: the point $(i-1,k_i-1)$ records the value of the $i$th index, and
consecutive points are joined by north steps followed by one east step.
There are $\binom{2n-h}{n}$ unrestricted paths.  For the paths that first cross
below the line $y=x$, reflect the initial segment through that first crossing;
this gives a bijection with paths from $(h+1,-1)$ to $(n,n)$, of which there
are $\binom{2n-h}{n+1}$.  Hence, by the reflection principle, the number of
admissible paths is
\[
  \binom{2n-h}{n}-\binom{2n-h}{n+1}
  =
  \frac{h+1}{n+1}\binom{2n-h}{n}.
\]
Substituting $n=N-1$ and $h=j-1$ gives
\eqref{eq:staircase-coefficients}.

For the sum, use the same reflected form and put $t=2n-h$.  Then
\[
\begin{aligned}
  \sum_{j=1}^{N}c_j
  &=
  \sum_{h=0}^{n}
  \left(
  \binom{2n-h}{n}-\binom{2n-h}{n+1}
  \right) \\
  &=
  \sum_{t=n}^{2n}
  \left(
  \binom{t}{n}-\binom{t}{n+1}
  \right)
  =
  \binom{2n+1}{n+1}-\binom{2n+1}{n+2}.
\end{aligned}
\]
This last difference equals
$\frac{1}{n+2}\binom{2n+2}{n+1}$, which is $C_N$ because $n=N-1$; see
\cite[\S26.5]{DLMF} for the Catalan lattice-path interpretation.
\end{proof}

Using the multiplicity vectors in \eqref{multentries} in
\eqref{eq:master-identity} gives the following identities.  For these
evaluations, the closed forms on the
right were obtained from the package command \texttt{NestedFRHS[N,lows]} by
taking \texttt{lows=Range[N]}:
\begin{align}
\int_0^\infty
\left(3\sech x-2\sech^2x-\sech^3x\right)\frac{\dd x}{x^2}
&=2\F_2+\F_3 \notag\\
&=-\frac{10G}{\pi}
  -\frac{\pi}{2}
  +\frac{\psi_3(1/4)}{16\pi^3}
  +\frac{28\zeta(3)}{\pi^2}. \label{eq:B3}
\end{align}
\begin{align}
\int_0^\infty
\left(9\sech x-5\sech^2x-3\sech^3x-\sech^4x\right)\frac{\dd x}{x^2}
&=5\F_2+3\F_3+\F_4 \notag\\
&=-\frac{30G}{\pi}
  -\frac{3\pi}{2}
  +\frac{3\psi_3(1/4)}{16\pi^3}
  +\frac{238\zeta(3)}{3\pi^2} \notag\\
&\qquad
  +\frac{124\zeta(5)}{\pi^4}. \label{eq:B4}
\end{align}
\begin{align}
\int_0^\infty
\left(\begin{aligned}
&28\sech x-14\sech^2x-9\sech^3x\\
&{}-4\sech^4x-\sech^5x
\end{aligned}\right)\frac{\dd x}{x^2}
&=14\F_2+9\F_3+4\F_4+\F_5 \notag\\
&=-\frac{185G}{2\pi}
  -\frac{21\pi}{4}
  +\frac{59\psi_3(1/4)}{96\pi^3}
  +\frac{\psi_5(1/4)}{768\pi^5} \notag\\
&\qquad
  +\frac{700\zeta(3)}{3\pi^2}
  +\frac{496\zeta(5)}{\pi^4}. \label{eq:B5}
\end{align}
\begin{align}
\int_0^\infty
\left(\begin{aligned}
&90\sech x-42\sech^2x-28\sech^3x\\
&{}-14\sech^4x-5\sech^5x-\sech^6x
\end{aligned}\right)\frac{\dd x}{x^2}
&=42\F_2+28\F_3+14\F_4  +5\F_5+\F_6 \notag\\
&=-\frac{593G}{2\pi}
  -\frac{71\pi}{4}
  +\frac{193\psi_3(1/4)}{96\pi^3}
  +\frac{5\psi_5(1/4)}{768\pi^5} \notag\\
&\qquad
  +\frac{10892\zeta(3)}{15\pi^2} +\frac{1860\zeta(5)}{\pi^4} 
+ \frac{762\zeta(7)}{\pi^6}. \label{eq:B6}
\end{align}
\begin{proposition}\label{prop:staircase-identity}
For the staircase lower bounds $(l_1,\ldots,l_N)=(1,2,\ldots,N)$, one has
\begin{equation}\label{eq:staircase-general}
  \int_0^\infty
  \left\{
  C_N\sech x
  -
  \sum_{j=1}^{N}
  \frac{j}{N}
  \binom{2N-j-1}{N-1}\sech^j x
  \right\}\frac{\dd x}{x^2}
  =
  \sum_{j=1}^{N}
  \frac{j}{N}
  \binom{2N-j-1}{N-1}\F_j,
\end{equation}
where $C_N$ denotes the $N$th Catalan number.
\end{proposition}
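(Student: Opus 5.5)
The plan is to specialize the master identity \eqref{eq:master-identity} to the staircase lower bounds, exactly as Proposition~\ref{prop:common-lower-bound-identity} specialized it to a common lower bound. By Lemma~\ref{lem:counting} and \eqref{eq:rhs-counting}, for any choice of lower bounds we have
\[
  \int_0^\infty
  \left\{
  \left(\sum_{j=1}^{N}c_j\right)\sech x
  -
  \sum_{j=1}^{N}c_j\sech^j x
  \right\}\frac{\dd x}{x^2}
  =
  \sum_{j=1}^{N}c_j\F_j ,
\]
where $c_j=c_j(N;1,2,\ldots,N)$ is the multiplicity of $\chi_j$ in $S_N(1,2,\ldots,N)$. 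The only inputs needed are therefore the explicit coefficients and their total.

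First I will invoke Proposition~\ref{prop:staircase-coefficients}, which gives $c_j=\frac{j}{N}\binom{2N-j-1}{N-1}$ for $1\le j\le N$; this comes from the lattice-path and reflection count. I will substitute this into both the $\sech^j x$ terms and the right-hand side $\sum c_j\F_j$. Second, for the coefficient of $\sech x$, I will use \eqref{eq:staircase-catalan-sum}, which shows that $\sum_{j=1}^N c_j=C_N$. This replaces the bracketed total by $C_N$ and gives \eqref{eq:staircase-general} verbatim. The term $j=1$ is retained on both sides. Its integrand contribution $c_1(\sech x-\sech x)$ is already absorbed into the collected form, and on the right $\F_1=0$, so keeping it costs nothing.

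Two small consistency points need checking. Convergence at $x=0$ is covered by the remark after \eqref{eq:master-identity}: the numerator is $O(x^2)$. At infinity the integrand is $O(x^{-2})$, so splitting $\int\sum c_j(\sech x-\sech^j x)x^{-2}$ into the collected form is legitimate as a finite linear combination of convergent integrals. Also, the identification $\chi_j=\F_j$ for $j\ge2$ is taken from \cite[(56)]{AbdulsalamMalmsten2025}, and $\chi_1=\F_1=0$ holds by convention.

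There is no real obstacle here, since the work was done in Proposition~\ref{prop:staircase-coefficients}. If anything, the delicate step is making sure the bijection in that proposition correctly handles the constraint $k_i\ge i$ together with the nesting $k_{i}\le k_{i+1}$ and the endpoint $k_N\le N$. The upper limit of the outermost sum forces $k_N=N$ only through the staircase bound $l_N=N$, and I would double-check this against the small vectors in \eqref{multentries}, for example $(2,2,1)$ for $N=3$.
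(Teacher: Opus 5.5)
Your proposal is correct and is essentially the paper's own proof. The paper likewise obtains \eqref{eq:staircase-general} by inserting the staircase coefficients \eqref{eq:staircase-coefficients} and the Catalan total \eqref{eq:staircase-catalan-sum} into the master identity \eqref{eq:master-identity}; your extra checks on convergence, on $k_N=N$, and on the vector $(2,2,1)$ are consistent with this but not needed.
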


\begin{proof}
Combining \eqref{eq:staircase-coefficients},
\eqref{eq:staircase-catalan-sum}, and \eqref{eq:master-identity} gives the
general staircase identity \eqref{eq:staircase-general}.
\end{proof}

\section{Limiting Identities, Orthogonality, and Further Directions}
\label{sec:further-directions}

The two families treated above illustrate a broader coefficient method.  For
arbitrary lower bounds, the coefficients are still obtained by counting the
possible completions of a nondecreasing chain.  The following recurrence gives
a finite form of this general count.

\begin{proposition}[general lower-bound recurrence]
\label{prop:general-lower-bound-recurrence}
Let $N\ge2$ and let $1\le l_i\le N$ for $1\le i\le N$.  Define functions
$D_i(r)$, for $2\le i\le N$ and $1\le r\le N$, by
\[
  D_N(r)=N-\max(r,l_N)+1
\]
and, recursively,
\[
  D_i(r)=\sum_{s=\max(r,l_i)}^N D_{i+1}(s),
  \qquad 2\le i\le N-1 .
\]
Then the coefficient of $\chi_j$ in $S_N(l_1,\ldots,l_N)$ is
\[
  c_j=
  \begin{cases}
    D_2(j),& j\ge l_1,\\
    0,&j<l_1.
  \end{cases}
\]
\end{proposition}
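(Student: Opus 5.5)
The plan is to interpret $c_j$ as a count of index chains and to peel off the summation variables one at a time, from the outermost inward. Fix the innermost index $k_1=j$. The term $\chi_j$ is produced once for every tuple $(k_2,\ldots,k_N)$ satisfying
\[
  j\le k_2\le k_3\le\cdots\le k_N\le N,
  \qquad k_i\ge l_i\ (2\le i\le N),
\]
and this is subject to $j\ge l_1$. Indeed, $k_1=j$ has to lie in the range $[l_1,k_2]$, so $j\ge l_1$ and $k_2\ge j$. Hence $c_j=0$ when $j<l_1$. The empty-sum convention covers any chain in which some lower bound exceeds the corresponding upper bound, so no case analysis is needed there.

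For $j\ge l_1$, I will prove by downward induction on $i$, from $i=N$ to $i=2$, the following claim. For $1\le r\le N$, the quantity $D_i(r)$ is the number of tuples $(k_i,\ldots,k_N)$ with
\[
  r\le k_i\le k_{i+1}\le\cdots\le k_N\le N,
  \qquad k_t\ge l_t\ (i\le t\le N).
\]

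The base case is $i=N$. Here we only need to count integers $k_N$ with $\max(r,l_N)\le k_N\le N$. Since $r,l_N\le N$, there are exactly $N-\max(r,l_N)+1\ge1$ of them, which is the stated formula for $D_N(r)$.

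For the inductive step, condition on the value $s=k_i$. This value runs over $\max(r,l_i)\le s\le N$. Once $k_i=s$ is fixed, the remaining tuples $(k_{i+1},\ldots,k_N)$ are exactly the chains counted by $D_{i+1}(s)$, because the constraint $k_{i+1}\ge k_i=s$ plays the role of the lower bound $r$. Summing over $s$ gives the recursion in the statement. Then taking $i=2$ and $r=j$ gives $c_j=D_2(j)$.

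The argument is essentially a reindexing, so the only delicate point is bookkeeping. I need to check that the nested sum \eqref{eq:nested-sum} really enumerates exactly these chains: the $i$th sum has lower limit $l_i$ and upper limit $k_{i+1}$, with $k_{N+1}:=N$. So the admissible region is $l_i\le k_i\le k_{i+1}$ for every $i$, which is precisely the chain condition above. With that checked, the two-sided constraints merge into the single lower bound $\max(r,l_i)$. I will also confirm that the recursion is well defined for $N=2$, where $D_2=D_N$ is given directly by the base case. As a sanity check, I will verify that $l_i\equiv m$ reproduces \eqref{eq:common-lower-bound-coefficients} and that $l_i=i$ reproduces \eqref{eq:staircase-coefficients}.
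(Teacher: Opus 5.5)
Your proposal is correct and follows essentially the same argument as the paper. You fix $k_1=j$, read $D_i(r)$ as the number of admissible tails $(k_i,\ldots,k_N)$ with $k_i\ge r$, and get the recurrence by conditioning on $k_i=s\ge\max(r,l_i)$; you also make explicit the downward induction and the check that the nested sum ranges exactly over the chains $l_i\le k_i\le k_{i+1}$ (with $k_{N+1}=N$), both of which the paper leaves implicit.
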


\begin{proof}
Fix the innermost index $k_1=j$.  If $j<l_1$, then no such term occurs.  Suppose
$j\ge l_1$.  After $k_1$ is fixed, the remaining choices are chains
\[
  j\le k_2\le k_3\le\cdots\le k_N\le N,
  \qquad k_i\ge l_i\quad (2\le i\le N).
\]
The quantity $D_i(r)$ counts the number of possible tails
$k_i,k_{i+1},\ldots,k_N$ under the additional condition $k_i\ge r$.  For
$i=N$ this gives $D_N(r)=N-\max(r,l_N)+1$.  For $2\le i\le N-1$, choosing
$k_i=s$ with $s\ge\max(r,l_i)$ leaves $D_{i+1}(s)$ possible tails.  Summing
over $s$ gives the stated recurrence, and taking $i=2$ and $r=j$ gives the
coefficient of $\chi_j$.
\end{proof}

This recurrence gives an effective way to compute the coefficients for any
fixed lower-bound list. The common-lower-bound and staircase coefficient
formulas \eqref{eq:common-lower-bound-coefficients} and
\eqref{eq:staircase-coefficients} are recovered from this recurrence by taking
$l_1=\cdots=l_N=m$ and $(l_1,\ldots,l_N)=(1,\ldots,N)$, respectively.  The same
coefficients may be encoded in the polynomial
\[
  W(u)=\sum_{j=1}^N c_j u^j,
\]
so that \eqref{eq:master-identity} may be viewed as an identity determined by
the single polynomial $W$.  For the common-lower-bound and staircase families,
this formulation leads to the following normalized limiting identities.

\begin{proposition}[normalized limiting identities]
\label{prop:normalized-limits}
Let $m\ge1$ be fixed and put
\[
  C_{N,m}=\binom{2N-m}{N-m}.
\]
Then
\begin{equation}\label{eq:common-normalized-limit}
  \lim_{N\to\infty}
  \frac{1}{C_{N,m}}
  \sum_{j=m}^{N}
  \binom{2N-j-1}{N-j}\F_j
  =
  \int_0^\infty
  \frac{\sech x-\dfrac{\sech^m x}{2-\sech x}}{x^2}\dd x
  =
  \sum_{r=0}^{\infty}2^{-r-1}\F_{m+r}.
\end{equation}
For the staircase family, with
\[
  C_N=\frac1{N+1}\binom{2N}{N},
\]
one has
\begin{equation}\label{eq:staircase-normalized-limit}
  \lim_{N\to\infty}
  \frac{1}{C_N}
  \sum_{j=1}^{N}
  \frac{j}{N}\binom{2N-j-1}{N-1}\F_j
  =
  \int_0^\infty
  \frac{\sech x-\dfrac{\sech x}{(2-\sech x)^2}}{x^2}\dd x
  =
  \sum_{j=1}^{\infty}\frac{j}{2^{j+1}}\F_j .
\end{equation}
\end{proposition}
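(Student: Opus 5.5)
The plan is to work with the integral form throughout, using the finite identities of Propositions~\ref{prop:common-lower-bound-identity} and \ref{prop:staircase-identity}, where the right-hand sides already equal the normalized integrals for each $N$. Since $\chi_j$ is given by \eqref{eq:chi-def} and $\F_j=\chi_j$, dividing by $C_{N,m}$ gives
\[
  \frac{1}{C_{N,m}}\sum_{j=m}^{N}\binom{2N-j-1}{N-j}\F_j
  =\int_0^\infty \frac{\sech x - W_N(\sech x)}{x^2}\dd x,
\]
where $W_N(u)=C_{N,m}^{-1}\sum_{j=m}^N\binom{2N-j-1}{N-j}u^j$ and $W_N(1)=1$ by \eqref{hockeyhh1}. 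The first step is the pointwise limit of the weights. For fixed $r=j-m$,
\[
  \binom{2N-m-r-1}{N-m-r}\Big/\binom{2N-m}{N-m}\to 2^{-r-1},
\]
since each step $j\mapsto j+1$ multiplies the ratio by $(N-j)/(2N-j-1)\to1/2$, and the initial factor is $\binom{2N-m-1}{N-m}/\binom{2N-m}{N-m}=(N)/(2N-m)\to 1/2$. Hence $W_N(u)\to\sum_{r\ge0}2^{-r-1}u^{m+r}=u^m/(2-u)$ for $0\le u<1$, and also at $u=1$ both sides equal $1$.

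For the staircase family I would do the same thing with $c_j/C_N$. Writing
\[
  \frac{c_j}{C_N}=\frac{j(N+1)}{N}\,\frac{\binom{2N-j-1}{N-1}}{\binom{2N}{N}},
\]
the ratio $\binom{2N-j-1}{N-1}/\binom{2N}{N}$ tends to $2^{-j-1}$ (with $j+1$ factors, each near $1/2$). This gives the limit weight $j2^{-j-1}$, whose generating function is $\sum_j j2^{-j-1}u^j=u/(2-u)^2$, and the sum equals $1$ at $u=1$.

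The main step, and the obstacle, is justifying the interchange of limit and integral near $x=0$, where the $1/x^2$ factor requires the numerator to vanish to second order uniformly in $N$. I would use dominated convergence with $u=\sech x\in(0,1]$, writing
\[
  \sech x-W_N(\sech x)=\sum_j w_{N,j}\,(\sech x-\sech^j x),
\]
where the weights are nonnegative and sum to $1$. The inequality $0\le u-u^j\le (j-1)(1-u)$ then gives
\[
  0\le \text{numerator}\le (1-\sech x)\sum_j (j-1)w_{N,j}.
\]
So it suffices to bound the first moments $\sum_j j\,w_{N,j}$ uniformly in $N$. For the common family the weights are dominated geometrically: the successive ratio $(N-j)/(2N-j-1)\le 1/2$ gives $w_{N,m+r}\le 2^{-r}$, hence bounded moments. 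For the staircase family, a similar ratio bound gives $w_{N,j}\le Cj2^{-j}$. Since $(1-\sech x)/x^2$ is integrable on $(0,\infty)$ (it is bounded near $0$ and $O(x^{-2})$ at infinity), dominated convergence gives the integral equalities. The same domination also shows, termwise, that $\sum_j w_{N,j}\chi_j\to\sum_j w_j\chi_j$, because $0\le\chi_j\le (j-1)\int_0^\infty(1-\sech x)x^{-2}\dd x$ and the series $\sum_j w_j (j-1)$ converges. This yields the final series expressions $\sum_r 2^{-r-1}\F_{m+r}$ and $\sum_j j2^{-j-1}\F_j$, using $\F_1=0$ where needed.
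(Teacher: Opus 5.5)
Your proposal is correct and follows essentially the same route as the paper: pointwise limits of the normalized weights, geometric ratio bounds that keep the first moments bounded uniformly in $N$, the inequality $0\le t-t^j\le (j-1)(1-t)$, and dominated convergence. The differences are cosmetic. You use the single majorant $(1-\sech x)/x^2$ on all of $(0,\infty)$ instead of splitting at $x=1$. You also identify the series by a Tannery-type limit on the coefficient side, using $0\le\chi_j\le (j-1)\int_0^\infty(1-\sech x)x^{-2}\dd x$, rather than by interchanging the integral with the infinite sum.
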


\begin{proof}
Throughout the proof, $\F_j$ denotes the closed form for $\chi_j$ introduced in
\eqref{eq:Fj}, with $\F_1=0$.
We first prove the coefficient limits.  In the common-lower-bound case, write
$j=m+r$ and set
\[
  p_{N,r}
  =
  \frac{1}{C_{N,m}}
  \binom{2N-m-r-1}{N-m-r},
  \qquad 0\le r\le N-m.
\]
These numbers sum to $1$ by the hockey-stick identity \eqref{hockeyhh1}.  For each fixed $r$,
writing the binomial coefficients as factorials gives
$p_{N,r}\to 2^{-r-1}$.  Moreover
\[
  \frac{p_{N,r+1}}{p_{N,r}}
  =
  \frac{N-m-r}{2N-m-r-1}
  \le \frac12 ,
\]
and hence the weighted averages
\[
  \sum_{r=0}^{N-m} r\,p_{N,r}
\]
are bounded independently of $N$.

For the staircase family, put
\[
  q_{N,j}
  =
  \frac{1}{C_N}
  \frac{j}{N}\binom{2N-j-1}{N-1},
  \qquad 1\le j\le N .
\]
These numbers sum to $1$ by \eqref{eq:staircase-catalan-sum}.  For each fixed
$j$, writing the binomial coefficients as factorials gives
$q_{N,j}\to j/2^{j+1}$.  Also
\[
  \frac{q_{N,j+1}}{q_{N,j}}
  =
  \frac{j+1}{j}\frac{N-j}{2N-j-1}
  \le \frac34,
  \qquad j\ge2,
\]
while $q_{N,2}=q_{N,1}$.  The weighted averages
\[
  \sum_{j=1}^{N} j\,q_{N,j}
\]
are therefore bounded independently of $N$.

We now justify the passage to the integral limit.  Let $t=\sech x$.  If
$(\rho_j)$ denotes any one of the normalized coefficient vectors above, then
\[
  0\le
  t-\sum_j\rho_j t^j
  =
  \sum_j\rho_j(t-t^j)
  \le
  (1-t)\sum_j\rho_j(j-1),
\]
because $t-t^j\le (j-1)(1-t)$ for $0\le t\le1$.  On $0<x\le1$, the function
$(1-\sech x)/x^2$ is bounded, and the preceding weighted-average bounds give a
uniform majorant.  On $x\ge1$ we use
$0\le t-\sum_j\rho_j t^j\le t=\sech x$, and hence
\[
  0\le
  \frac{t-\sum_j\rho_j t^j}{x^2}
  \le
  \frac{\sech x}{x^2}.
\]
Thus the
normalized integrands are dominated by an integrable function independent of
$N$.

Dominated convergence gives the two integral limits.  It remains only to
justify the series on the right-hand sides.  For the common-lower-bound limit
the weights are $2^{-r-1}$, and for the staircase limit they are $j/2^{j+1}$;
in both cases the corresponding weighted average of the indices is finite.
Therefore the same estimate
\[
  0\le t-t^j\le (j-1)(1-t),\qquad 0\le t\le1,
\]
gives an integrable majorant for the corresponding sums of integrands.  Hence
the integral may be interchanged with these sums.  This gives the two series on
the right-hand sides of
\eqref{eq:common-normalized-limit} and
\eqref{eq:staircase-normalized-limit}.
\end{proof}

\subsection{Orthogonality and the centered polynomials}
\label{subsec:orthogonality-reflection}

The reflection formula \eqref{eq:P-reflection} gives the following parity-cancellation consequence. If we define
\[
  Q_k^{(m)}(t)=P_k\left(m,\frac{m+1}{2}+t\right),
\]
then
\begin{equation}\label{eq:Q-parity-main}
  Q_k^{(m)}(-t)=(-1)^kQ_k^{(m)}(t).
\end{equation}
Thus the centered polynomial $Q_k^{(m)}$ is even when $k$ is even, and odd when
$k$ is odd.

Let $a>0$, and let $W\in L^1([-a,a])$.  If $W$ is even, then
\begin{equation}\label{eq:even-weight-parity-orthogonality}
  \int_{-a}^{a}
  Q_j^{(m)}(t)Q_k^{(m)}(t)W(t)\dd t
  =
  0,
  \qquad\text{whenever }j+k\text{ is odd}.
\end{equation}
If $W$ is odd, then
\begin{equation}\label{eq:odd-weight-parity-orthogonality}
  \int_{-a}^{a}
  Q_j^{(m)}(t)Q_k^{(m)}(t)W(t)\dd t
  =
  0,
  \qquad\text{whenever }j+k\text{ is even}.
\end{equation}
Indeed,
\[
  Q_j^{(m)}(-t)Q_k^{(m)}(-t)
  =
  (-1)^{j+k}Q_j^{(m)}(t)Q_k^{(m)}(t),
\]
so in each of the two cases above the integrand is odd.  This proves the
parity-cancellation relations for every nonnegative $m$.

This, however, does not imply full finite orthogonality.  By full
finite orthogonality we mean that there is a weight, or more generally a
linear functional $\mathcal L$ on polynomials, for which
\[
  Q_0^{(m)},Q_1^{(m)},\ldots,Q_m^{(m)}
\]
are mutually orthogonal and have nonzero squared norms.  That is,
\[
  \mathcal L\!\left(Q_i^{(m)}Q_j^{(m)}\right)=0\quad(i\ne j),
  \qquad
  \mathcal L\!\left(\left(Q_i^{(m)}\right)^2\right)\ne0.
\]
For $m=1,2,3$ this can be done.  In the case $m=1$,
\[
  Q_0^{(1)}=1,\qquad Q_1^{(1)}=t.
\]
The positive symmetric measure
\[
  \frac12(\delta_{-1}+\delta_1)
\]
makes the two polynomials orthogonal; here $\delta_a$ denotes the Dirac measure at $a$. More generally, any symmetric positive measure not concentrated
at the origin works.  For $m=2$ one has
\[
  Q_0^{(2)}=1,\qquad
  Q_1^{(2)}=2t,\qquad
  Q_2^{(2)}=t^2-\frac14.
\]
The positive symmetric measure
\[
  \frac34\delta_0+\frac18(\delta_{-1}+\delta_1)
\]
makes the three polynomials orthogonal.  For $m=3$,
\[
  Q_0^{(3)}=1,\qquad
  Q_1^{(3)}=3t,\qquad
  Q_2^{(3)}=3t^2-1,\qquad
  Q_3^{(3)}=t(t^2-1).
\]
One possible positive symmetric measure is
\[
  \frac{16}{33}(\delta_{-1/2}+\delta_{1/2})
  +\frac{1}{66}(\delta_{-\sqrt3}+\delta_{\sqrt3}).
\]

The situation changes at $m=4$, and the obstruction persists for every
$m\ge4$.  The first case is already clear:
\[
\begin{aligned}
  Q_0^{(4)}&=1,
  &Q_1^{(4)}&=4t,
  &Q_2^{(4)}&=6t^2-\frac52,\\
  Q_3^{(4)}&=t(4t^2-5),
  &Q_4^{(4)}&=\left(t^2-\frac14\right)\left(t^2-\frac94\right).
\end{aligned}
\]
These polynomials satisfy
\begin{equation}\label{whdsat}
  tQ_3^{(4)}
  =
  4Q_4^{(4)}+\frac56Q_2^{(4)}-\frac16Q_0^{(4)}.
\end{equation}
If \((R_n)_{n\ge0}\) is an orthogonal polynomial sequence with nonzero squared
norms, then the standard three-term recurrence theorem for orthogonal
polynomials, also known as Favard's theorem, states that multiplication by $t$ is
tridiagonal; see \cite[Ch.~I, \S4, pp.~18--19]{Chihara1978}:
\begin{equation}\label{favardhh}
  tR_n(t)=A_nR_{n+1}(t)+B_nR_n(t)+C_nR_{n-1}(t).
\end{equation}
Consequently, if the polynomials $Q_0^{(4)}, \ldots, Q_4^{(4)}$ were fully
orthogonal with nonzero squared norms, the term $Q_0^{(4)}$ would not occur
in the expansion \eqref{whdsat} of $tQ_3^{(4)}$.

More generally, the same calculation can be done from the centered product.
Writing
\[
  a_j=\frac{m+1}{2}-j,\qquad 1\le j\le m,
\]
the numbers $a_1,\ldots,a_m$ are symmetric about zero.  Also,
\eqref{eqaftwopnine} gives
\[
  Q_k^{(m)}(t)
  =
  P_k\left(m,\frac{m+1}{2}+t\right)
  =
  e_k(t+a_1,\ldots,t+a_m),
\]
Hence, by the central vanishing lemma (Lemma~\ref{lem:central-vanishing}), with
$n$ replaced by $m+1$, the odd elementary symmetric functions in the $a_j$
vanish. More explicitly,
\[
  e_k(t+a_1,\ldots,t+a_m)
  =
  \sum_{h=0}^{k}\binom{m-h}{k-h}e_h(a_1,\ldots,a_m)t^{k-h},
\]
using the generating function for elementary symmetric functions
\cite[Ch.~I, (2.2), p.~19]{Macdonald1995}.  Equivalently, as in the
combinatorial discussion in the introduction, one may count the terms directly:
after choosing the $h$ entries from which the $a$-terms are taken, the remaining
$k-h$ entries contributing $t$ may be chosen in \(\binom{m-h}{k-h}\) ways.
Therefore $Q_0^{(m)}(t)=1$, and
\[
\begin{aligned}
  Q_2^{(m)}(t)&=\binom m2t^2+e_2,\\
  Q_3^{(m)}(t)&=\binom m3t^3+(m-2)e_2t,\\
  Q_4^{(m)}(t)&=\binom m4t^4+\binom{m-2}{2}e_2t^2+e_4,
\end{aligned}
\]
where $e_2=e_2(a_1,\ldots,a_m)$ and $e_4=e_4(a_1,\ldots,a_m)$.  To evaluate
these constants $e_2$ and $e_4$, set $p_r=\sum_{j=1}^{m}a_j^r$. Since $p_1=p_3=0$, Newton's identities
\cite[Ch.~I, (2.11'), p.~23]{Macdonald1995} give
\[
  e_2=-\frac12p_2,\qquad
  e_4=\frac18(p_2^2-2p_4).
\]
Here
\[
  p_2=\frac{m(m^2-1)}{12},\qquad
  p_4=\frac{m(m^2-1)(3m^2-7)}{240},
\]
and therefore
\[
  e_2=-\frac{m(m^2-1)}{24},\qquad
  e_4=\frac{m(m-1)(m-2)(m-3)(m+1)(5m+7)}{5760}.
\]
Comparing the coefficients of $t^4,t^2$ for $m\ge4$, we obtain
\[
  tQ_3^{(m)}(t)
  =
  \frac{4}{m-3}Q_4^{(m)}(t)
  +\frac{(m-2)(m+1)}{12}Q_2^{(m)}(t)
  -\frac{m(m-1)(m-2)(m+1)}{720}Q_0^{(m)}(t).
\]
The last coefficient is nonzero for every $m\ge4$.  Hence the full finite
orthogonality of
\[
  Q_0^{(m)},Q_1^{(m)},\ldots,Q_m^{(m)}
\]
does not hold for $m\ge4$ by Favard's theorem \eqref{favardhh}.

There is nevertheless a natural infinite orthogonal family near this question.
Centering the product formula \eqref{eq:P-diagonal}, we have
\begin{equation}\label{lasttwoparities}
  q_n(t):=Q_n^{(n)}(t)
  =
  P_n\left(n,\frac{n+1}{2}+t\right)
  =
  \prod_{j=1}^{n}\left(t+\frac{n+1}{2}-j\right).
\end{equation}
Separating the two cases in \eqref{lasttwoparities} yields
\[
  q_{2r}(t)=\prod_{j=1}^{r}\left(t^2-\left(j-\frac12\right)^2\right),
  \qquad
  q_{2r+1}(t)=t\prod_{j=1}^{r}(t^2-j^2).
\]
A natural classical family to compare $q_{n}(t)$ with is obtained from the
associated Meixner--Pollaczek polynomials.  In the notation of the DLMF, their
connection with the Pollaczek polynomials is given by \cite[(18.35.10)]{DLMF}:
\[
  \mathscr P_n^\lambda(x;\phi,c)
  =
  P_n^{(\lambda)}(\cos\phi;0,x\sin\phi,c),
\]
while their real-line orthogonality and weight are given in
\cite[(18.30.17),(18.30.18)]{DLMF}.  We set
\[
  \lambda=\frac12,\qquad \phi=\frac{\pi}{2},\qquad c=0.
\]
With these substitutions, the weight formula \cite[(18.30.18)]{DLMF}
specializes to
\[
  w^{(1/2)}\left(x,\frac{\pi}{2},0\right)=\sech(\pi x),
\]
using the standard identity
$|\Gamma(1/2+ix)|^2=\pi\sech(\pi x)$.  We shall use the monic normalization
\[
  S_n(t)=\frac{n!}{2^n}\mathscr P_n^{1/2}\left(t;\frac{\pi}{2},0\right),
\]
so that the coefficient of $t^n$ in $S_n(t)$ is one.  In this normalization
the polynomials are characterized by
\begin{equation}\label{recurrencerels}
  S_0(t)=1,\qquad S_1(t)=t,\qquad
  S_{n+1}(t)=tS_n(t)-\frac{n^2}{4}S_{n-1}(t),
\end{equation}
and satisfy
\[
  \int_{-\infty}^{\infty}S_n(t)S_m(t)\sech(\pi t)\dd t
  =
  \frac{(n!)^2}{4^n}\delta_{mn},
\]
where $\delta_{mn}$ is the Kronecker delta.  This is the real-line
Meixner--Pollaczek orthogonality in
\cite[(18.30.17)]{DLMF}, after the above specialization and monic rescaling.
The parity is also visible from the recurrence: $S_0$ is even, $S_1$ is odd,
and multiplication by $t$ switches parity.  Thus $S_n$ has
the same parity as $n$.  The same is clear for $q_n$ from the two product
formulas above. The centered products satisfy
\begin{equation}\label{recurr2hh}
  q_{n+2}(t)
  =
  \left(t^2-\frac{(n+1)^2}{4}\right)q_n(t),
  \qquad q_0(t)=1,\quad q_1(t)=t.
\end{equation}
From the recurrence relations \eqref{recurrencerels} and \eqref{recurr2hh}, one
obtains the exponential generating functions
\[
  \sum_{n\ge0}S_n(t)\frac{z^n}{n!}
  =
  \frac{\exp\!\left(2t\arctan(z/2)\right)}
       {\sqrt{1+z^2/4}},
  \qquad
  \sum_{n\ge0}q_n(t)\frac{z^n}{n!}
  =
  \frac{\exp\!\left(2t\operatorname{arsinh}(z/2)\right)}
       {\sqrt{1+z^2/4}}.
\]
Thus the two families have the same denominator in their exponential
generating functions; the only difference displayed here is the replacement of
$\arctan(z/2)$ by $\operatorname{arsinh}(z/2)$. Since $q_n$ and $S_n$ are both monic of degree $n$ and have the
same parity as $n$, there are unique constants $c_{n,r}$ such that
\begin{equation}\label{eq:MP-central-factorial-comparison}
  S_n(t)
  =
  q_n(t)+\sum_{r=1}^{\lfloor n/2\rfloor}c_{n,r}q_{n-2r}(t).
\end{equation}
In other words, \eqref{eq:MP-central-factorial-comparison} gives a connection 
between the centered products $q_n$ and the orthogonal
Meixner--Pollaczek polynomials $S_n$. The first few cases
are
\[
  S_0=q_0,\qquad
  S_1=q_1,\qquad
  S_2=q_2,\qquad
  S_3=q_3-\frac14q_1,\qquad
  S_4=q_4-q_2-\frac14q_0.
\]
The constants $c_{n,r}$ record how this classical orthogonal family differs
from the centered products, and may be worth studying in their own
right.  In this sense, the centered signed Stirling polynomials give parity
cancellation, while the Meixner--Pollaczek polynomials provide the natural
classical orthogonal comparison.

\subsection{Limiting Families and Interpolation}

The limiting identities suggest looking for other lower-bound families whose
coefficient polynomials have stable normalized limits.  There is also a natural
interpolation question behind the quantities $\F_j$.  For $\Re s>0$, define
\[
  \chi(s)=
  \int_0^\infty
  \frac{\sech x-\sech^s x}{x^2}\dd x ,
\]
where $\sech^s x=\exp(s\log(\sech x))$.  Near the origin the numerator is
$(s-1)x^2/2+O(x^4)$, locally uniformly in $s$, while at infinity the integrand
is exponentially decreasing on compact subsets of the half-plane $\Re s>0$.
Thus $\chi(s)$ is holomorphic for $\Re s>0$ and satisfies $\chi(n)=\chi_n$ for
positive integers $n$.  It would be useful to find a corresponding closed form
$\F(s)$ with $\F(n)=\F_n$, and to see whether the common-lower-bound and staircase identities
\eqref{eq:common-lower-bound-integral} and \eqref{eq:staircase-general}
have natural analogues for this interpolating function.

\subsection{Arithmetic Patterns}

The evaluations in
\eqref{eq:A2}--\eqref{eq:A6} and \eqref{eq:B3}--\eqref{eq:B6} also suggest an
arithmetic pattern.  Let
\[
  \beta(s)=\sum_{n=0}^{\infty}\frac{(-1)^n}{(2n+1)^s}
\]
denote the Dirichlet beta function.  Equivalently, $\beta(s)=L(s,\chi_4)$ is
the Dirichlet $L$-series
$\sum_{n=1}^{\infty}\chi_4(n)n^{-s}$, where $\chi_4$ is the non-principal
character modulo $4$; see
\cite[\S25.15(i)]{DLMF}.  In particular, $\beta(2)=G$.  Since
$\psi_{2r+1}(1/4)$ may be written in terms of
$\zeta(2r+2,1/4)$, and since
\[
  \zeta(s,1/4)
  =
  2^{2s-1}\left((1-2^{-s})\zeta(s)+\beta(s)\right),
\]
the polygamma terms in these evaluations can be rewritten using beta
values.  For the constants appearing in
\eqref{eq:A2}--\eqref{eq:A6} and \eqref{eq:B3}--\eqref{eq:B6}, this gives the
following finite $\mathbb Q$-span:
\[
  \left\{
  \pi,\,
  \frac{\beta(2)}{\pi},\,
  \frac{\beta(4)}{\pi^3},\,
  \frac{\beta(6)}{\pi^5},\,
  \frac{\zeta(3)}{\pi^2},\,
  \frac{\zeta(5)}{\pi^4},\,
  \frac{\zeta(7)}{\pi^6}
  \right\}.
\]
The generators in this finite span are drawn from the ambient collection
\[
  \left\{
  \pi,\,
  \frac{\zeta(2r+1)}{\pi^{2r}},\,
  \frac{\beta(2r)}{\pi^{2r-1}}
  : r\ge1
  \right\}.
\]
Thus the constants in \eqref{eq:A2}--\eqref{eq:A6} and
\eqref{eq:B3}--\eqref{eq:B6} may be viewed as lying in finite rational spans
generated by finite subsets of this collection.  Since the general formula
\eqref{eq:Fj} expresses $\F_j$ through derivatives of Hurwitz-zeta differences
at shifted quarter arguments, it would be useful to determine whether each
$\F_j$ can be expressed using a natural $j$-dependent finite subset of this
collection, and how the resulting span changes with the depth of the nested
sum.  Weighted chain
counts and further ballot-type specializations are related directions, but they
require separate enumeration arguments.

\section{Computational Package}\label{sec:package}

For reproducibility, we have also written an accompanying Wolfram Language
package, \texttt{Malmsten\allowbreak Integral\allowbreak Sequences}, version 1.0
\cite{AbdulsalamMalmstenIntegralSequences2026}.
The package file is intended to be distributed with this article and
implements the identities and reductions used in the examples.  After
loading the package, the principal commands are
\begin{center}
\small
\begin{tabular}{@{}l@{}}
\texttt{SignedStirlingP[k,m,z]},\quad \texttt{MalmstenSequence[n,a,b]},\\
\texttt{ChiSequence[n]},\quad \texttt{FSequence[n]},\\
\texttt{LambdaSequence[n]},\quad \texttt{DeltaSequence[n]},\\
\texttt{NestedChiCoefficients[N,lows]},\quad \texttt{NestedFRHS[N,lows]},\\
\texttt{NestedFSymbolicRHS[N,lows]},\quad \texttt{EvaluateFRHS[expr]},\\
\texttt{NestedChiIntegrandPolynomial[N,lows,x]}.
\end{tabular}
\end{center}
Thus \texttt{MalmstenSequence[n,a,b]} implements
\eqref{eq:malmsten-closed-form}, which is
\cite[Theorem~1]{AbdulsalamMalmsten2025}.  The command
\texttt{SignedStirlingP[k,m,z]} implements \eqref{eq:P-explicit}, which is
\cite[Lemma~1]{AbdulsalamMalmsten2025}.

The command \texttt{ChiSequence[n]} implements the closed form for $\chi_n$
given by \cite[Theorem~2]{AbdulsalamMalmsten2025}; equivalently, in the notation
of the present work, this is the quantity $\F_n$ in \eqref{eq:Fj}.  The package
defines \texttt{LambdaSequence[n]}
from $n\lambda_n=\chi_n+\lambda_1$ in \cite[(9)]{AbdulsalamMalmsten2025}, and
defines \texttt{DeltaSequence[n]} from \eqref{eq:delta-chi-difference}, namely
as \texttt{ChiSequence[n+1]-ChiSequence[n]}.

For a lower-bound list $\texttt{lows}=\{l_1,\ldots,l_N\}$, the nested-sum
commands implement \eqref{eq:nested-sum} as follows:
\begin{center}
\small
\begin{tabular}{@{}ll@{}}
\texttt{NestedChiCoefficients[N,lows]}
  & multiplicities $(c_1,\ldots,c_N)$ in \eqref{eq:coefficient-form},\\
\texttt{NestedChiIntegrandPolynomial[N,lows,x]}
  & polynomial $p(\sech x)$ in \eqref{eq:integral-counting},\\
\texttt{NestedFRHS[N,lows]}
  & closed-form right-hand side \eqref{eq:rhs-counting},\\
\texttt{NestedFSymbolicRHS[N,lows]}
  & symbolic right-hand side in terms of $\F[j]$,\\
\texttt{EvaluateFRHS[expr]}
  & replacement of symbolic terms by \texttt{FSequence[j]}.
\end{tabular}
\end{center}

As a numerical check on the displayed examples, we evaluated the integrals in
\eqref{eq:A2}--\eqref{eq:A6} and \eqref{eq:B3}--\eqref{eq:B6} by
high-precision quadrature and compared them with the corresponding values
returned by \texttt{NestedFRHS}.  In each case, the two values agreed to at
least $70$ decimal digits.

\section*{Competing Interests and Acknowledgements}

The first author gratefully acknowledges the
\href{https://mathematik.univie.ac.at/third-mission/zusammenarbeit-mit-dem-globalen-sueden/}
{Vienna African Scholarship} of the University of Vienna.  Some of the results
in this article were obtained while the first author was visiting the Faculty
of Mathematics at the University of Vienna with the support of this scholarship. He thanks Balázs Szendrői of the University of Vienna for encouraging him to seek a conceptual
meaning of the earlier Malmsten-type integral evaluations. 
He is also grateful to Emanuel Carneiro of the Abdus Salam International Centre for Theoretical Physics (ICTP) for teaching him functional analysis and for supervising his thesis; this training strengthened the analytical background used in this work. The first author also acknowledges later encouragement from Carneiro, whose careful questions led him to return to Ripon's formula with greater care and eventually to the combinatorial explanation of the missing factor.
He also thanks the Arbeitsgemeinschaft Diskrete Mathematik
(AGDM, Discrete Mathematics Working Group) at the University of Vienna for hosting his talk
\href{https://mathematik.univie.ac.at/en/eventsnews/full-news-display/news/signed-generalized-stirling-polynomials-and-generalized-hyperbolic-integrals/}
{``Signed generalized Stirling polynomials and generalized hyperbolic
integrals''} on January 13, 2026.  A discussion with Michael J. Schlosser after
that talk led to the present collaboration, from which this article grew
through subsequent exchanges. 
The second author acknowledges partial support by the FWF Austrian Science Fund,
grant  \href{https://www.doi.org/10.55776/P32305}{10.55776/P32305}.

\end{document}